\numberwithin{equation}{section}
\newcommand{\jap}[1]{\langle #1 \rangle}
\def\a{\alpha}
\def\b{\beta}
\def\c{\gamma}
\def\d{\delta}
\def\e{\varepsilon}
\def\f{\varphi}
\def\g{\psi}
\def\k{\kappa}
\def\l{\lambda}
\def\m{\mu}
\def\s{\sigma}
\def\x{\xi}
\def\y{\eta}
\renewcommand{\L}{\Lambda}
\def\tor{\mathbb{T}}
\newcommand{\Op}{\mathrm{Op}}
\def\re{\mathbb{R}}
\def\ze{\mathbb{Z}}
\def\pa{\partial}
\renewcommand{\Re}{\text{{\rm Re}\;}}
\renewcommand{\Im}{\text{{\rm Im}\;}}
\newcommand{\supp}{\text{{\rm supp}\;}}
\newcommand{\Ker}{\text{{\rm Ker}\;}}
\newcommand{\Ran}{\text{{\rm Ran}\;}}
\newcommand{\sgn}{\text{{\rm sgn}\;}}
\newtheorem{thm}{Theorem}[section]
\newtheorem{lem}[thm]{Lemma}
\newtheorem{prop}[thm]{Proposition}
\newtheorem{cor}[thm]{Corollary}
\newtheorem{defn}[thm]{Definition}
\theoremstyle{definition}
\newtheorem{example}{Example}
\theoremstyle{remark}
\newtheorem{rem}[thm]{Remark}
\title[]%
{Equivalence of classical and quantum completeness for real principal type operators on the circle}
\author[]%
{Kouichi Taira}
\address{Faculty of Mathematics, Kyushu University, 744, Motooka, Nishi-ku, Fukuoka, Japan}
\email{taira.kouichi.800@m.kyushu-u.ac.jp}
\begin{document}
\maketitle

\begin{abstract}
In this article, we prove that the completeness of the Hamilton flow and essential self-adjointness are equivalent for real principal type operators on the circle. Moreover, we study spectral properties of these operators. The proof is based on the construction of eigenfunctions with non-real eigenvalues which is well-known in scattering theory. Moreover, the relationship between scattering theory and the essential self-adjointness is explained.
\end{abstract}

\section{Introduction}

\subsection{Motivation and history}

In this note, we study the relationship between the classical completeness and the quantum completeness. For a  differential operator $P=\sum_{|\a|\leq m}a_{\a}(x)D_x^{\a}$ (or more generally a pseudodifferential operator) on a manifold $M$, we say that:
\begin{itemize}
\item The system is classically complete if the Hamilton vector field $H_p=\pa_{\x}p\cdot \pa_x-\pa_xp\cdot \pa_{\x}$ on $T^*M$ generated by its principal symbol $p(x,\x)=\sum_{|\a|= m}a_{\a}(x)\x^{\a}$ is complete;

\item The system is classically complete if $P$ with domain $C_c^{\infty}(M)$ is essentially self-adjoint, that is $P|_{C_c^{\infty}}(M)$ has a unique self-adjoint extension.
\end{itemize}
Sometimes the full symbol $\sum_{|\a|\leq  m}a_{\a}(x)\x^{\a}$ should be considered in the former case.
The purpose of this paper is to construct a class of pseudodifferential operators on the one-dimensional torus such that the classical completeness is equivalent to the quantum completeness.
 As a corollary, some wave operators that were known to be classically incomplete are proved to be quantum incomplete.

It is believed that the classical completeness and the quantum completeness are closely related. This is because the essential self-adjointness of a differential operator $P$ is equivalent to the existence and uniqueness of solutions to a time-dependent Schr\"odinger equation
\begin{align*}
i\pa_tu+Pu=0,\quad u|_{t=0}\in L^2
\end{align*}
essentially due to Stone's theorem. On the other hand, the classical completeness means that the Hamiltonian equation of the symbol for $P$ with arbitrary initial data has unique time-global solutions. In other words, these completeness corresponds to the well-posedness of the master equations in classical mechanics and quantum mechanics respectively. See \cite[Volume II, Appendix X.I]{RS}, \cite{BMS}, and the references therein.

A natural problem is to find a class of operators whose classical and quantum completeness are equivalent to each other. In the recent paper \cite{CB}, it is conjectured that this is true for real-principal type operators on closed manifolds (see \cite[\S2, \S 14]{CB}) and gives an affirmative answer for wave operators associated with generic Lorentzian metrics on the two-dimensional torus. In this paper, we give an affirmative answer for one-dimensional real principal type operators.

There are lots of literatures about these equivalence for elliptic operators:
\begin{itemize}

\item The classical completeness is equivalent to the quantum completeness for a large class of one-dimensional Schr\"odinger operators $-\pa_x^2+V(x)$ on $(0,\infty)$ (\cite[Theorem X.9]{RS}). On the other hand, there are pathological examples of $V$, where these equivalence does not hold (\cite[Volume II, Examples 1,2]{RS}).

\item For a complete Riemannian manifold $(M,g)$, the Laplace-Beltrami operator $-\Delta_g$ is essentially self-adjoint on $C_c^{\infty}(M)$ (\cite{Ch,Ga,S}). In particular, the classical completeness implies the quantum completeness in this situation since the classical completeness is equivalent to the geodesic completeness;

\item The converse is not true. In fact, we consider the Riemmanian manifold $(\re^n\setminus \{0\},g_{\mathrm{std}})$, where $g_{\mathrm{std}}$ is the standard Riemmanian metric on the Euclidean space. Then $-\Delta$ is essentially self-adjoint on $C_c^{\infty}(\re^n\setminus\{0\})$ if and only if $n\geq 4$  (see \cite[after Theorem X.11]{RS}) although $\re^n\setminus \{0\}$ is not geodesically complete for all dimensions $n$.

\item  In \cite{BMS}, the essential self-adjointness of Schr\"odinger operators $-\Delta_g+V$ on non-compact Riemannian manifolds and other motivated examples is studied. 

\end{itemize}

On the other hand, the essential self-adjointness of non-elliptic operators, such as wave operators on Lorentzian manifolds, has been studied over the last decade:

\begin{itemize}
\item The essential self-adjointness of the wave operator on static spacetimes is verified by using separation of variables in \cite{DS1}. In \cite{DS2}, it was conjectured that this is also true on asymptotically static spacetime (a perturbation of a static spacetime) and this problem was solved in \cite{NT2} later. On spacetimes with compactly supported perturbation, \cite{WZ} gives its much simpler proof. 

\item In \cite{V2,NT,NT3}, it is shown that the wave operator is essentially self-adjoint on $C_c^{\infty}$ if the Lorentzian manifold is close to Minkowski near infinity under the null non-trapping condition.

\item 
On the contrary, the paper \cite{K} gives an example on a globally hyperbolic and geodesically complete Lorentzian metric on $\re^4$ whereas the corresponding wave operator is not essentially self-adjoint on $C_c^{\infty}(\re^4)$, which is motivated by the classical example of one-dimensional Schr\"odinger operator (see \cite[Example 2, Appendix X.I]{RS}).

\item In \cite{KK2}, Kassell-Kobayashi showed  the essential self-adjointness on the standard Lorentzian locally symmetric spaces (for the definition of the standard spaces, see \cite[\S1.1]{KK2}). Before that, they started to study spectral theory on Lorentzian locally symmetric spaces in \cite{KK}. They discovered examples of Lorentzian locally symmetric spaces where eigenvalues of the wave operators are invariant under the deformation of the lattices (the discrete subgroups).  For other motivated examples, see the survey article \cite{Ko}.

\end{itemize}
The essential self-adjointness of wave operators on such spaces are used to deduce the spectral action principle (\cite{DW,WZ}), which reveals the relationship between (a power of) the Feynman propagator and the scalar curvature. Here the former plays a significant role in Quantum Field Theory and the latter is important in the context of the Einstein-Hilbert action that represents the action for which the Euler-Lagrange equation gives rise to the Einstein equation.

We remark that the essential self-adjointness of non-elliptic operators was less studied before the above works. In fact, the problem is more difficult than the elliptic or subelliptic cases since most of previous works on essential self-adjointness crucially depend on the (sub)ellipticity, the positivity or the symmetry of the operators. Here we give a new insight from the perspective of scattering theory to tackle the essential self-adjointness problem on closed manifolds following author's previous work \cite{T} for the repulsive Schr\"odinger operators.
 The author hopes that the method and the idea given in the paper are one of the key ingredients to study the essential self-adjointness of non-elliptic operators.

In summary, the purpose of this paper is the following:
\begin{itemize}
\item To prove equivalence for classical and quantum completeness for real principal type operators on the circle $\tor=\re/2\pi \ze$. 
\item To give its proof in view of microlocal analysis and to apply the technique developed in \cite{T} to operators on closed manifolds.

\end{itemize}
Moreover, we study spectral properties of real principal type operators on the circle. Here we use the $L^2$-space as $L^2=L^2(\tor, dx)$ with the flat density $dx$. In addition, our method admits long-range perturbations $V$.

\subsection{Results on classical and quantum completeness}

Our main result of this paper is the following theorem. For the definition of the symbol class $\Psi^m_{\mathrm{phg}}$ and the principal symbol $\s$, see Section \ref{sectionmic}.

\begin{thm}\label{mainess}
Let $m\geq 0$ and $P\in \Psi^m_{\mathrm{phg}}$, which is symmetric on $C^{\infty}(\tor)$ with respect to the inner product of $L^2=L^2(\tor,dx)$.
Suppose that the principal symbol $p:=\s(P)$ is homogeneous of order $m$ and is real principal type in the sense of Definition \ref{pridef}.
Then the following conditions are equivalent:

\noindent$(i)$ The Hamilton vector field $H_p$ is complete as a vector field on $T^*\tor$.

\noindent$(ii)$ The operator $P+V$ is essentially self-adjoint on $C^{\infty}(\tor)$ for each symmetric operator $V\in \Psi^{m-\k}_{\mathrm{phg}}$ with $\k>\frac{1}{2}$.

\noindent$(iii)$ $m\leq 1$ or $p$ is elliptic in the sense of Definition \ref{pridef}.

\end{thm}

\begin{rem}
In $(ii)$, we only consider long-range perturbations of order $m-\k$ with $\k>\frac{1}{2}$. The only place where the condition $\k>\frac{1}{2}$ (instead of $\k>0$) is used is in the construction of an incoming WKB ansatz (Proposition \ref{WKBthm}). A more precise analysis like \cite[\S 4]{T} may allow us to deal with perturbations of order $m-\k$ with $\k>0$.
\end{rem}

The proof of $(i)\Leftrightarrow (iii)$, $(iii)\Rightarrow (ii)$ and $(ii)\Rightarrow (iii)$ are given in Subsections \ref{subseciandiii}, \ref{subseciiitoii} and \ref{subseciitoiii} respectively.

Now we give an example of $P$.

\begin{example}
Let $p(x,\x)=(\sin x)\x^2$ and $P=-\pa_x(\sin x\pa_x)$. Then $P$ satisfies the assumptions of Theorem \ref{mainess}. Since $m=2$ in this case and since $p$ is not elliptic, Theorem \ref{mainess} implies that $P$ is not essential self-adjoint on $C^{\infty}(\tor)$. More generally, $p(x,\x)=a(x)\x^2$ and $P=-\pa_x(a(x)\pa_x)$ satisfies Assumption of Theorem \ref{mainess}, where a real-valued smooth function $a(x)$ only has simple zeros.
\end{example}

\begin{rem}

$(1)$ In the expository note \cite{T2}, we give four different ways of the proof for non-essential self-adjointness of a simple operator $P=-\pa_x(\sin x \pa_x)$. The first one is the same method as here and the second one depends on another Fredholm problem inspired by the paper \cite{V1}.
The third one is to use the Fourier series and construct $L^2$-eigenfunctions associated with non-real parameters directly. The fourth proof is based on the integration by parts, which can be regarded as a boundary pairing formula like \eqref{boundrypair} below.

$(2)$
Let $p(x,\x)=(\sin^k x)\x^2$ and $P=-\pa_x(\sin^k x\pa_x)$ with $k\geq 2$. Then it follows that $p$ is not real principal type. Actually, as is proved in \cite{CB}, $H_p$ is complete and $P$ is essentially self-adjoint on $C^{\infty}(\tor)$.
\end{rem}

As a corollary, we show that the wave operators on basic examples (\cite{O}) of null incomplete closed Lorentzian manifolds are not essentially self-adjoint although the former example is already dealt with in \cite{CB}.

\begin{cor}\label{examplecor}
\noindent$(i)$
Let $\a\in \re\setminus \{0\}$ and $g_{\a}=-\a(\sin x) dy^2+2dxdy$ be a Lorentzian metric on $\tor^2_{(x,y)}$. Then the wave operator $\Box_{g_{a}}=2\pa_{x}\pa_y+\a\pa_{x}(\sin x\pa_x)$ with the metric $g_{\a}$ is not essential self-adjoint on $C^{\infty}(\tor^2)$.

\noindent$(ii)$ Let $(M,g)$ be the Clifton-Pohl torus (for its definition, see Section $\ref{Sectionproofcor}$ or \cite[\S7, Example 16]{O}). Then $\Box_g$ is not essentially self-adjoint on $C^{\infty}(M)$.
\end{cor}

The proof of this corollary is given in Section \ref{Sectionproofcor}.

\begin{rem}
$(1)$ The metric $g_{\a}=-\a(\sin  x) dy^2+2dxdy$ is very similar to the Schwarzschild metric with the Eddington-Finkelstein coordinate
\begin{align*}
-\left(1-\frac{r_g}{r} \right)dv^2+2dvdr+r^2(d\theta^2+\sin^2\theta d\f^2)
\end{align*}
near the even horizon $r=r_g$. In fact, $\sin  x$ has a simple zero at $x=0$ on one hand and $1-\frac{r_g}{r}$ also has a simple zero at $r=2r_g$ on the other hand.
In our proof, the lack of the essential self-adjointness of the operator $\Box_{g_{\a}}$ is due to the existence of zeros of $\sin  x$. Thus, it might suggest that the event horizon in the Schwarzschild spacetime breaks the essential self-adjointness of the wave operator there.
 
$(2)$
From the first part of the corollary, it turns out that
\begin{itemize}
\item A wave operator associated with the Lorentzian metric is not always essentially self-adjoint even on the closed manifold.
\item Essential self-adjointness of wave operators is not stable under a $C^0$-perturbation of the metric since $\Box_{g_0}$ (for $\a=0$) is essentially self-adjoint.
\end{itemize}
We also note that the metric $g_{\a}$ for $\a\in\re\setminus \{0\}$ is not geodesically complete.

$(3)$ The problem on the essential self-adjointness of the Clifton-Pohl torus is proposed in \cite{CB}. The author does not know whether the Clifton-Pohl torus satisfies the assumption of \cite[Theorem 12.2]{CB}, which is a criterion for the non-essential self-adjointness on the two-dimensional torus.

\end{rem}

\subsection{Results on the spectral properties}

The spectral properties of $0$-th order operators are studied in \cite{C,Tao} motivated by the studies on forced waves (\cite{CS,DZ2}). In this paper, we deal with the operators of general orders. It is well-known that the spectrum of an elliptic operator with order $m>0$ is discrete on a compact manifold. Therefore, we concentrate on the case where $p$ is not elliptic.

\begin{thm}\label{spthm}
Let $P$ and $V$ be as in Theorem \ref{mainess}. Assume that $p$ is not elliptic.

\noindent $(i)$ Suppose $0< m\leq 1$. We denote the unique self-adjoint extension of $P+V|_{C^{\infty}(\tor)}$ by the same symbol $P+V$. Then the spectrum of $P+V$ is absolutely continuous except discrete eigenvalues.

\noindent$(ii)$ Suppose $m>1$. Then the spectrum of each self-adjoint extension of $P+V|_{C^{\infty}(\tor)}$  is discrete and consists of eigenvalues with finite multiplicity.

\end{thm}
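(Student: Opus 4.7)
\textbf{Proof plan for Theorem \ref{spthm}.}
The strategy is to treat (i) by a localized Mourre commutator argument (using Theorem \ref{mainess} to know the self-adjoint extension is unique) and (ii) by showing that every self-adjoint extension has compact resolvent.

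\emph{Part (i).} Since $0 < m \leq 1$, Theorem \ref{mainess} guarantees essential self-adjointness, so $\widetilde P := \overline{(P+V(x,D))|_{C^{\infty}(\tor)}}$ is the unique self-adjoint extension. My plan is to establish a local Mourre estimate at each energy $\l_0 \in \re$. Using the real principal type condition (Definition \ref{pridef}) together with completeness of $H_p$ from Theorem \ref{mainess}(i), I construct a classical symbol $a \in S^0$ on $T^*\tor \setminus 0$ satisfying $H_p a \geq c\,\chi(p)$, where $\chi$ cuts off to a small neighborhood of $\l_0$; on the compact base $\tor$ this requires an orbit-averaging construction along the bicharacteristic flow on the energy surface, to absorb possibly periodic orbits. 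With $A = \Op(a)$, the principal symbol of $i[\widetilde P, A]$ equals $H_p a \in S^m$, and the perturbation $V(x,D) \in \Op S^{m-\k}$ together with subprincipal commutator terms are relatively compact with respect to $\widetilde P$. The Mourre theorem then yields a limiting absorption principle for $\widetilde P$ on a neighborhood of $\l_0$, hence local absolute continuity with at most finitely many eigenvalues of finite multiplicity. Covering $\re$ by such Mourre intervals concludes (i).

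\emph{Part (ii).} For $m>1$ with $p$ not elliptic, condition (iii) of Theorem \ref{mainess} is violated, so $(P+V(x,D))|_{C^\infty(\tor)}$ fails to be essentially self-adjoint and its deficiency indices are nonzero. I will show (a) the deficiency indices are finite and (b) some (hence, by the finite-rank resolvent-difference formula, every) self-adjoint extension $\widetilde P$ has compact resolvent. Both reduce to one microlocal a priori estimate: every $u \in L^2(\tor)$ with $(P+V(x,D))^* u \in L^2$ lies in $H^s(\tor)$ for some $s>0$. Elliptic regularity handles the region outside the characteristic set $\Sigma = p^{-1}(0)\setminus 0$. Near $\Sigma$, the decisive feature when $m>1$ is that bicharacteristics of $H_p$ in $\Sigma$ reach the fiber infinity $|\x|=\infty$ in finite time, which is the failure of completeness underlying Theorem \ref{mainess}(i); the corresponding radial points of the fiber-compactified phase space act as sources or sinks for the rescaled flow. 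Melrose--Vasy type radial point estimates at these radial sets yield the required Sobolev gain, equivalently realized by a positive commutator $i[P,B] + B^*B$ elliptic of order $2s$ in a conic neighborhood of $\Sigma$ with $B \in \Op S^{s}$. Rellich compactness converts the inclusion $D(\widetilde P) \hookrightarrow H^s(\tor)$ into compactness of $(\widetilde P - i)^{-1}$, yielding (b); the same estimate applied to $\ker(P^* \mp i)$ gives (a).

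\emph{Main obstacle.} The principal difficulty is the construction of the escape/conjugate symbol in each part. In (i), globalizing a Mourre escape function on the compact base $\tor$ when the bicharacteristic flow on an energy surface may be periodic will likely require orbit averaging or a non-homogeneous correction of $a$ to preserve strict positivity of $H_p a$. In (ii), the task is to produce a single positive commutator identity which simultaneously controls every radial point of $\Sigma$ with a strictly positive Sobolev gain, in the complete absence of ellipticity for $P$. Both steps are expected to be accomplished by adapting the microlocal techniques of \cite{T} from the Euclidean to the compact manifold setting.
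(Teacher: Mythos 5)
Your part (ii) is essentially the paper's argument: the paper combines the radial source and radial sink estimates (Theorems \ref{radso}, \ref{radsi}) at the radial sets $L_{\mathrm{so}}, L_{\mathrm{si}}$ of Proposition \ref{radofp} with elliptic regularity to get $D\bigl((P+V)|_{C^\infty}^*\bigr)\subset H^{\frac{m-1}{2}-0}(\tor)$, and then concludes discreteness of the spectrum of every self-adjoint extension from compactness of this inclusion (via \cite[Proposition 5.1]{T}, rather than your deficiency-index/Krein formula phrasing, but the mechanism is the same). One point you should make explicit: the sink estimate carries a control term $\|Bu\|$ microsupported near but off $L_{\mathrm{si}}$, and it is only because the radial sink is \emph{isolated} in the characteristic set that this term can be absorbed by the elliptic estimate; this is the step the paper singles out.

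Part (i) has a genuine gap in the construction of the conjugate operator. You propose a homogeneous escape function $a\in S^0$ with $H_pa\geq c\,\chi(p)$ obtained by orbit-averaging on the energy surface. But for $0<m<1$ this is impossible: with $a\in S^0$ one has $H_pa\in S^{m-1}$, which tends to $0$ as $|\x|\to\infty$, while the energy surface $\{p=\l_0\}$, $\l_0\neq 0$, is non-compact and escapes to fiber infinity over the zeros of $a_\pm$ (since $a_\pm(x)=\l_0|\x|^{-m}\to 0$ there). So $H_pa$ cannot be bounded below by a positive constant on the energy shell, and no choice of $a\in S^0$ yields a Mourre estimate. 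The obstruction is not periodic orbits on the energy surface (the paper notes the trapped set lies only in the zero section); it is the degeneration at fiber infinity near the radial source/sink. The paper's fix is to take the conjugate operator of \emph{positive} order: $\s(A_m)=-e(x,\x)\jap{\x}^{1-m}$ for $0<m<1$ (and $-e(x,\x)\log\jap{\x}$ for $m=1$), where $e$ is the escape function of $(\ref{edef})$ equal to $\pm1$ near the radial source/sink sets; then $H_p\s(A_m)\in S^0$ is bounded below by $c>0$ up to a symbol supported in the elliptic region, and the latter contributes a compact error because operators microsupported in the elliptic set are relatively compact with respect to $P$ (Lemma \ref{Mourrelow}). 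The resulting Mourre estimate is global in energy, so no covering by Mourre intervals is needed. Your plan as written would fail at the first step for $m<1$; replacing your $a$ by the paper's $\jap{\x}^{1-m}$-weighted escape function adapted to the radial sets is the missing idea.
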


The proof of Theorem \ref{spthm} $(i)$ and $(ii)$ are given in Subsections \ref{subsecMourre} and \ref{subsecdiscspetc} respectively.

\begin{rem}
The spectral properties of the real principal type operators are similar to those of the repulsive Schr\"odinger operator
\begin{align*}
-\Delta-(1+|x|^2)^{\a}\quad \text{on}\quad \re^n\quad \text{for}\quad \a>0.
\end{align*}
It is well-known that the repulsive Schr\"odinger operator is essentially self-adjoint on $C_c^{\infty}(\re^n)$ if and only if $\a\leq 1$. In \cite{BCHM}, for $\a\leq 1$, the authors prove that the spectrum of its unique self-adjoint extension is absolutely continuous. Moreover, in \cite[Corollary 1.5]{T}, it is shown that the spectrum of each self-adjoint extension of the repulsive Schr\"odinger operator is discrete if $\a>1$ and $n=1$.
 \end{rem}


\subsection{Relation to scattering theory}

The idea of the method used in the paper is based on the one in scattering theory.
In this subsection, we briefly explain how classical/quantum incompleteness on a closed manifold $M$ is related to scattering theory. See also \cite[Introduction]{W} which describes the relationship between the microlocal scattering and the traditional one. In earlier works in \cite{NT,NT2,NT3,V2}, the idea of scattering theory has already been used for proving essential self-adjointness of the wave operator on non-compact manifolds.

We recall that all incomplete solutions for ODE on $\re^n$ must escape to infinity at a finite time.
Similarly, if the Hamiltonian vector field $H_p$ is incomplete in $T^*M$, all incomplete trajectories reach the fiber infinity at a finite time since $M$ is closed (compact and boundaryless). 
Consequently, to study incomplete classical trajectories, we must analyze trajectories escaping (``scattering'') to infinity, which is the main interest in classical scattering theory. In the quantum scattering theory, the asymptotic behavior of scattering solutions (generalized eigenfunctions) is analyzed. If one considers it in $T^*M$ with closed manifold $M$, it corresponds to the study of the regularity theory of eigenfunctions since the analysis of the asymptotic behavior of functions at fiber infinity is essentially that of the regularity of these functions in microlocal analysis. On the other hand, the regularity of generalized functions is closely related to the essential self-adjointness of the operator as is explained in Section \ref{sectionSAreg}. That is how quantum scattering theory can be related to self-adjointness theory.

To digest the above intuitive explanation into more practical one, we briefly review a consequence of the stationary scattering theory (\cite[\S 3]{DZ} or \cite{M}) for $-\Delta+V(x)-\l$ with $V\in C_c^{\infty}(\re^n;\re)$ and $\l>0$.
 The outgoing/incoming spherical waves of the form $\frac{1}{|x|^{\frac{n-1}{2}}}a_{\pm}\left(\frac{x}{|x|}\right)e^{\pm i\sqrt{\l}|x|}$ with $a_{\pm}\in L^2(\mathbb{S}^{n-1})$ have an important role in this theory. In fact, all approximate solutions to $(-\Delta+V(x)-\l)u(x)\in\mathcal{S}(\re^n)$ with the minimal growth rate, which represent scattering states can be written as 
\begin{align}\label{scatexp}
u(x)=\frac{1}{|x|^{\frac{n-1}{2}}}a_{+}\left(\frac{x}{|x|}\right)e^{ i\sqrt{\l}|x|}+\frac{1}{|x|^{\frac{n-1}{2}}}a_{-}\left(\frac{x}{|x|}\right)e^{-i\sqrt{\l}|x|}+O\left(\frac{1}{|x|^{\frac{n+1}{2}}}\right)\,\, |x|\to \infty.
\end{align}
for some $a_{\pm}\in L^2(\mathbb{S}^{n-1})$. Moreover, if $u,v$ have such asymptotic expansions, then the boundary pairing formula (\cite[Theorem 3.39]{DZ} and \cite[Proposition 13]{M}) holds:
\begin{align}
&((-\Delta+V-\l)u,v)_{L^2(\re^n)}-(u,(-\Delta+V-\l)v)_{L^2(\re^n)}\nonumber\\
&=2i\l\left((a_-,b_-)_{L^2(\mathbb{S}^{n-1})}-(a_+,b_+)_{L^2(\mathbb{S}^{n-1})}\right)\label{boundrypair}
\end{align}
where $b_{\pm}$ are the ones in the expansion \eqref{scatexp} for $v$ and the left hand side is well-defined (albeit $u,v\notin L^2(\re^n)$). The boundary term in the right hand side of \eqref{boundrypair}, which  breaks the ``symmetric property'' of $-\Delta-\l$ since $((-\Delta-\l)u,v)_{L^2(\re^n)}\neq (u,(-\Delta-\l)v)_{L^2(\re^n)}$ unless the boundary term is zero. In other words, the boundary paring formulas shows how scattering particles give birth to boundary terms and hence break ``symmetric property'' of the operator.
Fortunately, the function $u$ satisfying \eqref{scatexp} does not belong to $L^2(\re^n)$ (unless $a_{\pm}=0$) and it does not contradict to the fact that $-\Delta$ is essentially self-adjoint.


For comparison, we consider a simple example in the boundary value problem for $-\pa_x^2$ on $[0,1]$. The classical trajectories of this operator are just straight lines and reach at the boundary of $[0,1]$ at a finite time. Hence, this system is classically incomplete. Moreover, Green's theorem (the integration by parts) yields
\begin{align}\label{Green}
(-\pa_x^2u,v)_{L^2}-(u,-\pa_x^2v)_{L^2}=[u(x)v'(x)-u'(x)v'(x)]_{x=0}^{x=1},\quad u,v\in H^2([0,1]).
\end{align}
The identity \eqref{Green} implies that $-\pa_x^2$ is not symmetric on its maximal domain and hence $-\pa_x^2$ is not essentially self-adjoint, that is, is quantum incomplete (this is also checked by the operator $-\pa_x^2$ has two different self-adjoint extensions, say the Dirichlet Laplacian and the Neumann Laplacian).
We emphasize that the Green formula \eqref{Green} is very similar to the boundary pairing formula \eqref{boundrypair} and the difference between them is whether the functions $u,v$ belong to $L^2$ or not.

The key observation of the proof of Theorem \ref{mainess} $(ii)\Rightarrow (iii)$ is that the construction approximate $L^2$-eigenfunctions satisfying a formula like \eqref{scatexp} is sufficient to prove non-essential self-adjointness of the operator.
Actually, it is a bit cumbersome to deduce the boundary paring formula. However, the construction of exact eigenfunctions as explained in the next subsection still works for non-real $z$ in our setting (when $m>1$), which directly implies the non essential self-adjointness of the operator. Therefore, we can skip the proof of the boundary pairing formula and proceed to simplify it. For the proof of the boundary paring formula in the $0$-th order case on the two-dimensional torus, see \cite[\S 6]{W}.

\subsection{Idea of the proof of Theorem \ref{mainess}}\label{subsecoutline}

Here we give the idea of the proof of Theorem \ref{mainess} $(ii)\Rightarrow (iii)$ for $V=0$ based on the method developed in \cite{T}. This is the most involved part of the paper. To do this, it suffices to prove $P$ is not essentially self-adjoint on $C^{\infty}(\mathbb{T})$ assuming $m>1$ and $p$ is not elliptic (and satisfies the real principal type condition).
As is known (\cite[Corollary after Theorem VIII. 3]{RS}), we only need to construct non-trivial $L^2$-eigenfunctions $u_{\pm}$ satisfying $(P-z_{\pm})u_{\pm}=0$ with $\pm \Im z_{\pm}>0$. We write $z=z_{\pm}$.

We would like to construct eigenfunctions which satisfy an analogue of \eqref{scatexp}.
These construction is divided into three parts:
\begin{itemize}
\item The construction of the outgoing resolvent $R_+(z)\in B(C^{\infty}(\mathbb{T}), L^2(\mathbb{T}))$ for generic $z\in\mathbb{C}$ via anisotropic Sobolev spaces motivated by the methods in \cite{FS} and \cite{V1}. 
Roughly speaking, the outgoing resolvent is defined to satisfy the following property: The image $R_+(z)f$ has better regularity at the incoming Lagrangian submanifold $\L_{\mathrm{inc}}$ (defined in Subsection \ref{escapesubst} and it is also called the radial source, see \cite[Definition E.50]{DZ}) and worse regularity at the outgoing Lagrangian submanifold (radial sink) $\L_{\mathrm{out}}$. In other words, it satisfies a microlocal analogue of Sommerfeld's radiation condition, see \cite[$(11.1)$]{Me} for scattering theory;

\item The construction of a non-trivial incoming WKB ansatz $u_{\mathrm{inc},z}\in L^2(\mathbb{T})$ such that $(P-z)u_{\mathrm{inc},z}$ is negligible, that is, a quasimode (an approximate eigenfunction) of $P-z$. This is a Lagrangian distribution associated with the incoming Lagrangian submanifolds  $\L_{\mathrm{inc}}$;

\item Now $u=u_{\mathrm{inc},z}-R_+(z)(P-z)u_{\mathrm{inc},z}\in L^2(\mathbb{T})$ satisfies $(P-z)u=0$ and more importantly $u\neq 0$ due to the wave front condition of $u_{\mathrm{inc},z}$. In fact, $u_{\mathrm{inc},z}$ has non-trivial wavefront in the incoming Lagrangian submanifolds $\L_{\mathrm{inc}}$ whereas the image of $R_+(z)$ has trivial wavefront there (actually a weaker statement is sufficient for our purpose, see the proof of Proposition \ref{eigenconst}).

\end{itemize}
This is the well-known construction of perturbed spherical waves in scattering theory (\cite{GY} and \cite[Section 12]{Me}) for $P=-\Delta+V(x)-\l$ on $\re^n$ with $V\in C_c^{\infty}(\re^n;\re)$ and $\l>0$ although there are few differences between this case and our setting;
\begin{itemize}
\item The outgoing resolvent $R_+(1)$ for $-\Delta-1$ just maps from $\mathcal{S}(\re^n)$ to the weighted $L^2$-space $L^{2,-\frac{1}{2}-0}(\re^n)(=\bigcap_{\e>0}(1+|x|)^{-\frac{1}{2}-\e}L^2(\re^n)) $ and the image is not contained in $L^2(\re^n)$. Moreover, the (essential) self-adjointness of $-\Delta$ is used to construct $R_+(1)$ since it is usually defined by the limit of the $L^2$ resolvent: $R_+(1):=\lim_{\e\searrow 0}(-\Delta-1-i\e)^{-1}$;

\item Non-trivial incoming WKB ansatz does not belong to $L^2(\re^n)$ since it is of the form $|x|^{-\frac{n-1}{2}}a_-\left(\frac{x}{|x|}\right) e^{-i|x|}$ for $|x|\gg 1$.

\end{itemize}
Indeed, we can show that the range of our outgoing resolvent $R_+(z)$ is included in $H^{\frac{m-1}{2}-\e}(\mathbb{T})) $ for some $\e>0$ but is not in $H^{\frac{m-1}{2}}(\mathbb{T})$ and that our incoming ansatz $u_{so,z}$ belongs to $H^{\frac{m-1}{2}-\e}(\mathbb{T})\setminus H^{\frac{m-1}{2}}(\mathbb{T})$. 
This is why the range of our outgoing resolvent is contained in $L^2(\mathbb{T})$ when the order $m$ of the operator $P$ is greater than $1$ since $H^{\frac{m-1}{2}-0}(\mathbb{T})\subset L^2(\mathbb{T})$ if only if $m>1$.
Furthermore, we construct the outgoing resolvent by a microlocal method. Its merit is that the essential self-adjointness of the operator is not needed for its construction. This is very important since our purpose here is to prove that $P$ is not essentially self-adjoint.

\subsection{Possible generalization to higher dimensional cases}

The method explained in the last subsection is robust enough perhaps to generalize the result on non-essential self-adjointness of operators to higher dimensional cases. In fact, Theorem \ref{Fredcor} shows that the outgoing resolvent can be constructed in more general situations when one assumes the existence of an escaping function. On the other hand, the construction of the WKB ansatz is more involved. First, the incoming set (the radial source) is not always smooth like in the case of Anosov flows. Therefore, it seems difficult to construct a WKB ansatz (a Lagrangian distribution) associated with this set. Second, even though this set is smooth and Lagrangian, it is not trivial whether the transport equation is solvable. Its solvability is needed to obtain a better approximation of solutions (see \cite[Theorem 25.2.4]{Ho} and \cite[\S 10.2]{Z}). The existence of a solution of the transport equation is something to do with that of a flow invariant (Maslov) half-density (see \cite[\S 10.2, Remark]{Z}). The natural question is whether a conic Lagrangian submanifold which is a radial source or sink admits a flow invariant (Maslov) half-density.

The author believes that the proof in this paper would work in the situation considered in \cite{CS}, \cite{DZ2}, and \cite{W} since the structure of the incoming Lagrangian submanifold is well understood and an invariant half-density exists there (\cite[Lemma 2.5]{DZ}) albeit the case $m=0$ (but the construction of an incoming WKB state should work for $m>1$ in the same way as \cite[\S 4]{W}). Here, we do not discuss it more and the generalizations leave for future work.

\subsection{Notation}

We fix some notations. The function space $\mathcal{D}'(M)$ and $H^s=H^s(M)$ denote the set of all distributions and the Sobolev spaces of order $s$ on a closed manifold $M$. We write $H^{k-0}=\bigcap_{\e>0}H^{k-\e}$. For Banach spaces $X,Y$, $B(X,Y)$ denotes the set of all linear bounded operators form $X$ to $Y$. For a Banach space $X$, we denote the norm of $X$ by $\|\cdot\|_{X}$. If $X$ is a Hilbert space, we write its inner metric of $X$ by $(\cdot, \cdot)_{X}$, where $(\cdot, \cdot)_{X}$ is linear with respect to the right variable. We also denote $\|\cdot\|=\|\cdot\|_{L^2}$ and $(\cdot,\cdot)=(\cdot, \cdot)_{L^2}$. For linear operators $A,B$, we write $[A,B]=\mathrm{ad}_AB=AB-BA$.

\noindent
\textbf{Acknowledgment.} 
This work was partially supported by JSPS Research Fellowship for Young Scientists, KAKENHI Grant Number 17J04478, 20J00221, 23K13004 and the program FMSP at the Graduate School of Mathematics Sciences, the University of Tokyo. The author would like to thank Shu Nakamura for helpful discussions. The author also would like to thank Kenichi Ito for encouraging to write this paper and to Keita Mikami for letting me know the paper \cite{CB}. The author is grateful to Yves Colin de Verdi\'ere and Maxim Braverman for their interest on this work.

\section{Pseudodifferential operators}\label{sectionmic}

In this section, we recall some terminology in microlocal analysis on a closed (compact and boundaryless) manifold $M$ of dimension $n$. 
We say that $\c$ is a chart on $M$ if $\c:U\to V$ is a diffoemorphism, where $U\subset M$ and $V\subset \re^n$ are open sets. Fix a smooth (Riemannian) metric $g$ on $T^*M$ and write
\begin{align*}
\jap{\x}:=(1+|\x|_g^2)^{\frac{1}{2}},\quad |\x|_g^2=\sum_{j,k=1}^ng_{jk}(x)\x_j\x_k.
\end{align*}
On the torus, we use the flat Riemmanian metric there and $\jap{\x}=(1+|\x|^2)^{\frac{1}{2}}$, see \eqref{flatjap}.
We denote the zero section of $T^*M$ by $0$.

\subsection{Pseudodifferential operators}

We introduce the concept of the pseudodifferential operators following \cite[\S Appendix E]{DZ}, \cite[\S18.1]{Ho} or \cite[\S 14.1]{Z}.

\subsection*{Symbol classes}
We denote by the Kohn-Nirenberg symbol classes by $S^k$ and the homogeneous symbol class by $S_{\mathrm{hom}}^k$: For $k\in \re$, we define
\begin{align*}
S^k:=&\{a \in C^{\infty}(T^*M)\mid |\pa_{x}^{\a}\pa_{\x}^{\b}a(x,\x)|\leq C_{\a\b}\jap{\x}^{k-|\b|}\},\\
S_{\mathrm{hom}}^k:=&\{a \in C^{\infty}(T^*M)\mid a(x,\l\x)=\l^m a(x,\x)\,\, \text{for}\,\, (x,\x)\in T^*M,\,\, |\x|_g\geq 1,\,\, \l\geq 1\}.
\end{align*}

\subsection*{Pseudodifferential operators and principal/subprincipal symbols}
We denote $\Psi^{-\infty}$ by the set of all linear continuous maps whose integral kernels are smooth on $M\times M$ (\cite[Definition E.11]{DZ}).

\begin{defn}\label{symboldef}
\noindent$(i)$
For $k\in \re$, we define the class of pseudodiffferential operators $\Psi^k$ as follows: A linear continuous map $A:C^{\infty}(M)\to \mathcal{D}'(M)$ lies in $\Psi^k$ if both of the following hold:
\begin{itemize}
\item For each $\f,\g\in C^{\infty}(M)$ with $\supp \f\cap \supp \g=\emptyset$, we have $\f A\g\in \Psi^{-\infty}$;

\item For each chart $\c:U\to V\subset \re^n$, there exists $a\in S^k$ (defined on $T^*\re^n$) such that
\begin{align*}
(\c^{-1})^* A \c^*u(x)=a_{\c}(x,D_x)u(x)\quad \forall u\in C_c^{\infty}(V)
\end{align*}
where $a_{\c}(x,D_x)u(x)=(2\pi)^{-n}\int_{\re^n}\int_{\re^n}a_{\c}(x,\x)e^{i(x-y)\cdot \x}u(y)dyd\x$ is the standard pseudodifferential operator with symbol $a$. We say that $a_{\c}$ is the total symbol of $A$ in the chart $\c$.

\end{itemize}

\noindent$(ii)$ We say that $A\in \Psi^k$ belongs to the class $\Psi^k_{\mathrm{phg}}$  of polyhomogeneous pseudodifferential operators if for each chart $\c$, the total symbol $a$ of $A$ there has asymptotic expansion of the form $a_{\c}\sim \sum_{j=0}^{\infty}a_{m-j,\c}$ with $a_{m-j,\c}$ homogeneous of degree $m-j$.

\noindent$(iii)$ Let $A\in \Psi^k_{\mathrm{phg}}$. From \cite[$(18.1.30)$]{Ho}, $a_{m,\c}$ is invariantly defined, that is there exists $\s(A)\in S^k_{\mathrm{hom}}$ such that $\k_{\c}^*\s(A)=a_{m,\c}$, where the mapping $\k_{\c}:T^*V\to T^*U \subset T^*M$ is induced from the transpose map of the exterior derivative $d\c:TU\to TV$. We say that $\s(A)$ is the principal symbol of $A\in \Psi^k_{\mathrm{phg}}$. Moreover, the subprincipal symbol of $A\in \Psi^k_{\mathrm{phg}}$ in the chart $\c$ is defined by
\begin{align*}
\s_{\mathrm{sub}}(A)=a_{m-1,\c}+\frac{i}{2}\sum_{j=1}^n\frac{\pa^2a_{m,\c}}{\pa x_j\pa \x_j}.
\end{align*}

\end{defn}

\begin{rem}
Unfortunately, the subprincipal symbol $\s_{\mathrm{sub}}(A)$ is not invariantly defined. In fact, \cite[$(18.1.33)$]{Ho} shows that if $\c'$ is another chart, then
\begin{align*}
\left(a_{m-1,\c'}+\frac{i}{2}\sum_{j=1}^n\frac{\pa^2a_{m,\c'}}{\pa x_j\pa \x_j}\right)-\left(a_{m-1,\c}+\frac{i}{2}\sum_{j=1}^n\frac{\pa^2a_{m,\c}}{\pa x_j\pa \x_j} \right)=-\frac{1}{2}\sum_{j=1}^nJ^{-1}\pa_{x_j}J(\pa_{\x_j}a_{m,\c})
\end{align*}
where $J(x)=\det(\pa (\c'\circ \c^{-1})(x))$.
However, it is known that $\Psi^k_{\mathrm{phg}}$ acts on half-densities instead of functions, then $\s_{\mathrm{sub}}(A)$ defined above is invariantly defined (\cite[Theorem 18.1.33]{Ho}).
\end{rem}


\subsection{Quantization}\label{subsecquant}

Fix a finite collection of charts $\c_j:U_j\to V_j$ ($j=1,\hdots,J$) such that $\cup_{j=1}^JU_j=M$ and take a quadratic partition of unity $\{\chi_j\}_{j=1}^J\subset C^{\infty}(M)$, that is $\sum_{j=1}^J\chi_j^2=1$ everywhere and $\supp \chi_j\subset U_j$. For $k\in \re$ and $a\in S^k$, we define the quantization of $a$ by
\begin{align*}
\Op(a):=\sum_{j=1}^J \chi_j\c_j^* a_j(x,D_x)(\c_j^{-1})^*  \chi_j
\end{align*}
where $a_j(x,\x)=a(\c_j^{-1}(x), ({}^t\pa \c_j)(\c_j^{-1}(x))\x)$ for $(x,\x)\in T^*V_j$.

\subsection{Fundamental properties of pseudodifferential operators}\label{subsecsymbolcl}

Fix a smooth density $d\m$ on $M$. In this subsection, we denote the $L^2$-space induced from $d\m$ by $L^2(d\m)=L^2(M;d\m)$ in order to stress that density, where $(u,w)_{L^2(d\m)}=\int_M\overline{u(x)}w(x)d\m(x)$.

Moreover, we define the Hamilton vector field $H_a$ of $a\in C^{\infty}(T^*M )$ on $T^*M$ by 
\begin{align}\label{Hamvecdef}
H_a=\pa_{\x}a(x,\x)\cdot\pa_x-\pa_xa(x,\x)\cdot \pa_{\x}
\end{align}
in each local coordinate. For $a_j\in S^{k_j}$ for $j=1,2$, $H_{a_1}a_2\in S^{k_1+k_2-1}$.

\begin{lem}\label{PDOfund}
Let $k\in \re$ and $k_j\in \re$ for $j=1,2$.

\noindent$(i)$ If $k_1\leq k_2$, then $\Psi^{k_1}\subset \Psi^{k_2}$.

\noindent$(ii)$ $A_j\in \Psi^{k_i}$, then $A_1A_2\in \Psi^{k_1+k_2}$.

\noindent$(iii)$ If $A\in \Psi^k$, then $A\in B(H^{k+\ell},H^{\ell})$  for each $\ell \in\re$. If $k<0$, then $A$ is a compact operator on $L^2$.

\noindent$(iv)$
Let $a_j\in S^{k_j}$ for $j=1,2$. Then
\begin{align*}
[\Op(a_1), i\Op(a_2)]-\Op(H_{a_1}a_2)\in \Psi^{k_1+k_2-2},
\end{align*}
where we note $H_{a_1}a_2\in S^{k_1+k_2-1}$. 

\noindent$(v)$$($Sharp G\aa rding inequality$)$ Let $a\in S^{2k+1}$ and $A:=\Op(a)\in \Psi^{2k+1}$ with $k\in \re$ and $\Re a\geq 0$. Then there exists $C>0$ such that
\begin{align*}
\Re(u, Au)_{L^2}\geq -C\|u\|_{H^{k}}^2,\quad u\in C^{\infty}(M).
\end{align*}

\noindent$(vi)$ Let $A\in \Psi^k$. Assume that $u\in H^s$ and $Au\in H^{s-k+1}$. Then there exists a sequence $u_j\in C^{\infty}$ such that
\begin{align*}
\|u_j-u\|_{H^s}\to 0,\quad \|Au_j-Au\|_{H^{s-k+1}}\to 0.
\end{align*}

\end{lem}

\begin{proof}

These properties are proved by the standard arguments, for example, see  \cite[Appendix E]{DZ}, \cite[\S 18.1]{Ho} or \cite[\S 14]{Z}. The property $(vi)$ is stated in \cite[Lemma E.45]{DZ}.

\end{proof}

\begin{defn}\label{pridef}
Let $m\geq 0$ and $a\in S^m$.

\noindent$(i)$ We say that $a$ is elliptic if there is $C_1,C_2>0$ such that
\begin{align*}
|a(x,\x)|\geq C_1|\x|_g^m-C_2
\end{align*}

\noindent$(ii)$
We say that $a$ is real principal type if $a$ is real-valued and
\begin{align*}
a(x,\x)=0\Rightarrow da(x,\x)\neq 0\quad \text{for}\quad |\x|_g\geq 1.
\end{align*}

\end{defn}

\subsection{Wavefront set}

We briefly recall the notion of the (Sobolev) wavefront sets.

\begin{defn}
Let $u\in \mathcal{D}'(M)$ and $s\in \re$. We define the Sobolev wavefront set $\mathrm{WF}^s(u)$ of $u$ as follows: For $(x_0,\x_0)\in T^*M\setminus 0$, we say that $(x_0,\x_0)\notin \mathrm{WF}^s(u)$ if and only if there exists $a\in S^0$ and a conic neighborhood $U$ of $(x_0,\x_0)$ such that $a=1$ in $U\cap \{|\x|_g\geq 1\}$ and $\Op(a)u\in H^s(M)$.
We also define $\mathrm{WF}(u):=\cup_{s\in \re}\mathrm{WF}(u)$.

\end{defn}

\subsection{Lagrangian distributions}

In this subsection, we briefly discuss Lagrangian distributions. Here we do not get into the general theory of Lagrangian distributions in order to avoid using the half-density bundle and the Maslov bundle and concentrate on a simple conic Lagrangian submanifold
\begin{align}\label{simpleLag}
\L_{x_0,\pm}:=\{(x_0,\x)\in T^*\tor\mid \pm \x>0\},\quad x_0\in \tor.
\end{align}
In this paper, we use standard coordinates of $\tor$ in the following sense just for simplicity.
\begin{defn}\label{stdcoord}
We say that a coordinate of $\tor$ is standard if it is induced from the natural quotient map $\re\to \re/2\pi\mathbb{Z}=\tor$.

\end{defn}

Throughout of this paper, we use the flat Riemmanian metric on the torus and hence 
\begin{align}\label{flatjap}
\jap{\x}=(1+|\x|^2)^{\frac{1}{2}}
\end{align}
on a standard coordinate.
For $\frac{1}{2}<\d\leq 1$, we define
\begin{align*}
S_{\d,\pm}^{\ell}(\re):=\{b\in C^{\infty}(\re)\mid \forall\a\in\mathbb{N},\,\, \sup_{\x\in \re}\jap{\x}^{-\ell+\d|\a|}|\pa_{\x}^{\a}b(\x)|<\infty ,\quad  \supp b\subset \pm(0,\infty) \},
\end{align*}
where we denote $+(0,\infty))=(0,\infty)$ and $-(0,\infty)=(-\infty,0)$.

We define the space $I^{\ell}_{\d}(\tor,\L_{x_0,\pm})$ to be the set of all  $u\in\mathcal{D}'(\tor)$ such that $u\in C^{\infty}(\tor\setminus \{x_0\})$ and there exist $b\in S_{\d,\pm}^{\ell-\frac{1}{4}}(\re)$ and a smooth function $r$ near $x_0$ such that
\begin{align}\label{Lagdisloc}
u(x)=\int_{\re}b(\x)e^{i(x-x_0)\x}d\x+r
\end{align}
in a standard coordinate near $x_0$. We call an element of $I^{\ell}_{\d}(\tor,\L_{x_0,\pm})$ a Lagrangian distribution of order $(\ell,\d)\in \re\times (\frac{1}{2},1]$ associated with $\L_{x_0,\pm}$. Usually, a special case $\d=1$ is considered (\cite[Definition 25.1.1, Proposition 25.1.3]{Ho}) and most of the general theory in \cite[\S 25]{Ho} can be applied to our space $I^{\ell}_{\d}(\tor,\L_{x_0,\pm})$. 

\begin{lem}\label{Laglem}
Let $x_0\in \tor$, $\ell\in \re$, $0<\d\leq 1$ and $u\in I^{\ell}_{\d}(\tor,\L_{x_0,\pm})$. Let $A\in \Psi^k_{\mathrm{phg}}$ and set $a=\s(A)$.
Then we have the following:

\noindent$(i)$ $u\in H^{-\frac{1}{4}-\ell-0}(\tor)$ and $\mathrm{WF}(u)\subset \L_{x_0,\pm}$.

\noindent$(ii)$ We have $Au\in I^{k+\ell}_{\d}(\tor,\L_{x_0,\pm})$. Moreover, if we write $u$ as in \eqref{Lagdisloc}, then there exists $u'\in I^{k+\ell-\d}_{\d}(\tor,\L_{x_0,\pm})$ 
\begin{align*}
Au(x)=\int_{\re} a(x_0,\x)b(\x)e^{i(x-x_0)\x}d\x   +u'
\end{align*}
in the same coordinate as \eqref{Lagdisloc}.

\noindent$(iii)$ We assume $a(x_0,\x)=0$ for all $\x\in \supp b$, where $b$ is as in \eqref{Lagdisloc}. Then there exists $u''\in I^{k+\ell-2\d}_{\d}(\tor,\L_{x_0,\pm})$ such that
\begin{align*}
Au(x)=\int_{\re} \left(i^{-1}(\mathcal{L}_{H_a}b)(\x) +\s_{\mathrm{sub}}(A)(x_0,\x) b(\x)\right)e^{i(x-x_0)\x}d\x   +u''
\end{align*}
in the same coordinate as \eqref{Lagdisloc}, where 
\begin{align*}
(\mathcal{L}_{H_a}b)(\x)=-(\pa_xa)(x_0,\x)\pa_{\x}b(\x)-\frac{1}{2}(\pa_x\pa_{\x}a)(x_0,\x)b(\x)
\end{align*}
and $\s_{\mathrm{sub}}(A)$ is the subprincipal symbol defined in Definition \ref{symboldef}.

\end{lem}

\begin{rem}
The notation $\mathcal{L}_{H_a}$ is slightly different from the one in \cite[Theorem 25.2.4]{Ho}. Indeed, $\mathcal{L}_{H_a}$ is the Lie-derivative of $H_a$ acting on half-densities $\tilde{b}(\x)=b(\x)|d\x|^{\frac{1}{2}}$ on $\L_{x_0,\pm}$ there. Here, we identify $b$ with $\tilde{b}$ in the standard coordinate of the torus.
\end{rem}

For the proof, see \cite[Theorem 25.2.4]{Ho}.

\section{Some criterions of the essential self-adjointness}\label{sectionSAreg}

In this section, we give some criterions for the essential self-adjointness on a more general closed manifold $M$. We fix a smooth density $d\m$ on $M$ and denote the $L^2$-space with respect to $d\m$ by $L^2=L^2(M;d\m)$.

Now let $m\geq 0$ and $\tilde{P}\in \Psi^m$ which is symmetric on $L^2$ with its domain $C^{\infty}(M)$: $(u,\tilde{P}w)_{L^2}=(\tilde{P}u,w)_{L^2}$ for all $u,w\in C^{\infty}(M)$. We define the maximal domain of $\tilde{P}$ by
\begin{align}\label{Dmaxdef}
D_{\mathrm{max}}(\tilde{P})=\{u\in L^2\mid \tilde{P}u\in L^2\},
\end{align}
where we consider $\tilde{P}u$ in the distributional sense. The maximal domain is just the domain of the adjoint operator of $\tilde{P}|_{C^{\infty}(M)}$.

\subsection{Subellipticity}

Here we show that the essential self-adjointness of $\tilde{P}$ can be deduced from a kind of subellipticity as follows:

\begin{prop}\label{esscri1}
If $D_{\mathrm{max}}(\tilde{P})\subset H^{\frac{m-1}{2}}$ holds, then $\tilde{P}$ is essentially self-adjoint on $C^{\infty}(M)$. In particular, if $m\leq 1$ or $\tilde{P}$ is elliptic, then $\tilde{P}$ is essentially self-adjoint on $C^{\infty}(M)$.

\end{prop}

\begin{rem}
$(1)$
It turns out from our main result (Theorem \ref{mainess}) that the exponent $\frac{m-1}{2}$ is critical. In fact, Proposition \ref{radsinkpropes} implies that $D_{\mathrm{max}}(\tilde{P})\subset H^{\frac{m-1}{2}-0}$ with $\tilde{P}=P+V$ (under the real principal type condition). On the other hand, this operator is not essentially self-adjoint on $C^{\infty}(M)$ when $m>1$ and when $p$ is real principal type but is not elliptic due to Theorem \ref{mainess}.

$(2)$ The converse is not true. We consider $\tilde{P}=\pa_x^2-\pa_y^2$ on $M=\tor^2_{(x,y)}$. Then $\tilde{P}$ is essential self-adjoint on $C^{\infty}(\tor^2)$ but there is $u\in D_{\mathrm{max}}(\tilde{P})$ such that $u\notin H^{\e}(\tor^2)$ for all $\e>0$. In fact, the essential self-adjointness is easy to prove by using the Fourier series. Moreover, $u(x,y):=\sum_{n=1}^{\infty}\frac{1}{n^{\frac{1}{2}}(1+\log n)}e^{i(x-y)n}\in L^2(\tor^2)$ satisfies $\tilde{P}u=0$ and hence $u\in D_{\mathrm{max}}(\tilde{P})$. However, $u\notin H^{\e}(\tor)$ for all $\e>0$.

\end{rem}

To prove this proposition, we need some lemmas.

\begin{lem}\label{esscrilem1}
If $\tilde{P}$ is symmetric on $D_{\mathrm{max}}(\tilde{P})$ in the sense that $(u,\tilde{P}w)_{L^2}=(\tilde{P}u,w)_{L^2}$ for all $u,w\in D_{\mathrm{max}}(\tilde{P})$, then $\tilde{P}$ is essentially self-adjoint on $C^{\infty}(M)$.
\end{lem}

\begin{proof}
We recall that $D_{\mathrm{max}}(\tilde{P})$ is the domain of the adjoint operator of $\tilde{P}|_{C^{\infty}(M)}$: $D_{\mathrm{max}}(\tilde{P})=D((\tilde{P}|_{C^{\infty}(M)})^*)$. This means that the adjoint operator $(\tilde{P}|_{C^{\infty}(M)})^*$ is symmetric and hence $D((\tilde{P}|_{C^{\infty}(M)})^*)\subset D((\tilde{P}|_{C^{\infty}(M)})^{**})$. On the other hand, $(\tilde{P}|_{C^{\infty}(M)})^{**}$ is equal to the operator closure of $\tilde{P}|_{C^{\infty}(M)}$. Hence, $(\tilde{P}|_{C^{\infty}(M)})^*$ coincides with the operator closure of $\tilde{P}|_{C^{\infty}(M)}$, which implies the essential self-adjointness.

\end{proof}

Next, we recall a general result about pseudodifferential operators. The following lemma can be proved by an approximate argument.

\begin{lem}\cite[Lemma E.46]{DZ}
Let $s\in \re$. If $u\in H^s$ and $w\in H^{-s+m-1}$ satisfies $\tilde{P}u\in H^{s-m+1}$ and $\tilde{P}w\in H^{-s}$, then
\begin{align}\label{generalsymmet}
(\tilde{P}u,w)_{L^2}=(u,\tilde{P}w)_{L^2}.
\end{align}

\end{lem}

\begin{cor}\label{esscricor1}
If $u,w\in D_{\mathrm{max}}(\tilde{P})$ satisfies $u,w\in H^{\frac{m-1}{2}}$, then \eqref{generalsymmet} holds.
\end{cor}

\begin{proof}
We just take $s=\frac{m-1}{2}$ in the last lemma.
\end{proof}

\begin{proof}[Proof of Proposition \ref{esscri1}]
Proposition \ref{esscri1} directly follows from Lemma \ref{esscrilem1} and Corollary \ref{esscricor1}.
\end{proof}

\subsection{Non-real eigenfunctions}

The next proposition claims that the essential self-adjointness is equivalent to the non-existence of non-real $L^2$-eigenfunctions.

\begin{prop}\label{esscrieigen}
$\tilde{P}$ is essentially self-adjoint on $C^{\infty}(M)$ if and only if there exists $\m_{+},\m_-\in\mathbb{C}$ such that $\pm\Im \m_{\pm}>0$ and
\begin{align}\label{distriker}
u\in L^2,\,\, (\tilde{P}+ \m_{\s})u=0\Rightarrow u=0
\end{align}
for each $\s\in \{\pm\}$. Here $(\tilde{P}+ \m_{\s})u$ is defined in the distributional sense.

\end{prop}

\begin{proof}

We recall again that $D_{\mathrm{max}}(\tilde{P})$ is the domain of the adjoint operator of $\tilde{P}|_{C^{\infty}(M)}$. Then the condition \eqref{distriker} is equivalent to $\Ker((\tilde{P}|_{C^{\infty}(M)})^*+\m_{\s})=\{0\}$. Now the claim follows from  \cite[Corollary after Theorem VIII.3, Theorem X.1]{RS}.

\end{proof}

As a corollary, we show that the assumption of Proposition \ref{esscri1} can be relaxed. Indeed, Proposition \ref{esscri1} directly follows from Corollary \ref{Esscricor} below. However, Proposition \ref{esscri1} reveals the relationship between the subellipticity of $\tilde{P}$ and is important itself. Therefore, another proof is given in the last subsection.
 Although the following corollary is not used in this paper, its analog on non-compact manifolds has an important role in \cite{NT,NT2,NT3}.

\begin{cor}\label{Esscricor}
If there exists $\m_{+},\m_-\in\mathbb{C}$ such that $\pm\Im \m_{\pm}>0$ and $\{u\in L^2\mid (\tilde{P}+\m_{\s})u=0\} \subset H^{\frac{m-1}{2}}$ for each $\s\in \{\pm\}$, then $\tilde{P}$ is essentially self-adjoint on $C^{\infty}(M)$.

\end{cor}

\begin{proof}
Suppose that $u\in L^2$ satisfies $(\tilde{P}+\m_{\s})u=0$. Then $\tilde{P}u=-\m_{\s}u\in L^2$ and hence $u\in D_{\mathrm{max}}(\tilde{P})$.
By Corollary \ref{esscricor1}, we have
\begin{align*}
\m_{\s}\|u\|_{L^2}^2= (u,\tilde{P}u)_{L^2}=(\tilde{P}u,u)_{L^2}=\overline{\m_{\s}}\|u\|_{L^2}^2.
\end{align*}
Since $\Im \m_{\s}\neq 0$, we obtain $u=0$. Now the claim follows from Proposition \ref{esscrieigen}.

\end{proof}

\section{Escaping function and Fredholm property}

Let $M$ be a closed manifold and $d\m$ be a smooth density on $M$.
In this section, we show a Fredholm property for a pseudodifferential operator $\tilde{P}$ assuming the existence of an escaping function. We follow the argument in \cite[\S 3]{FS}, see also \cite[\S 3]{T} and \cite{V1}. We write $L^2=L^2(M;d\m)$ and $H^s=H^s(M)$.

Suppose that $m>1$ and $\tilde{P}\in \Psi^m$ is symmetric on $C^{\infty}(M)$ with respect to the inner metric of $L^2(d\m)$. We assume that a real-valued symbol $\tilde{p}\in S^m$ satisfies
\begin{align*}
\tilde{P}-\Op(\tilde{p})\in \Psi^{m-1}.
\end{align*}
We define the elliptic set of $\tilde{p}$ by
\begin{align*}
\mathrm{ell}_{r,R}(\tilde{p}):=\{(x,\x)\in T^*M\mid |\tilde{p}(x,\x)|\geq r\jap{\x}^m \}\cup \{(x,\x)\in T^*M\mid |\x|_g\leq R\}
\end{align*}
for $r,R>0$.

\subsection{Definition of an escaping function}

\begin{defn}\label{esces}
A real-valued function $k\in S^0$ is called an escaping function of $\tilde{P}$ if there exist $C_0>0$, $r_0>0$ $R_0>0$ such that
\begin{align}\label{escdefest}
H_{\tilde{p}}(k(x,\x)\log\jap{\x})\leq -C_0\jap{\x}^{m-1}+d(x,\x)^2,\quad \,\, (x,\x)\in T^*M,
\end{align}
where $d\in S^{\frac{m-1}{2}}(T^*M)$ is a real-valued symbol satisfying $\supp d\subset \mathrm{ell}_{r_0,R_0}(\tilde{p})$.

\end{defn}

In Lemma \ref{escexist}, we prove the existence of an escaping function for $\tilde{P}=P+V$ on $M=\tor$ under the assumptions of Theorem \ref{mainess}.
An immediate consequence of the existence of an escaping function is a subprincipal estimate as follows:

\begin{lem}\label{escpos}
Suppose that there exists an escaping function $k\in S^0$ of $\tilde{P}$.
There exits $C_1>0$ such that for $u\in C^{\infty}(M)$,
\begin{align*}
-(u, \Op(H_{\tilde{p}}(k\log\jap{\x})u)_{L^2}\geq C_0\|u\|_{H^{\frac{m-1}{2}}}^2-C_1\|Du\|_{L^2}^2-C_1\|u\|_{H^{\frac{m-2}{2}+0}}^2,
\end{align*}
where $D:=\Op(d)$ and $d\in S^{\frac{m-1}{2}}$ is as in Definition \ref{esces}.
\end{lem}

\begin{proof}
We note that $k\log\jap{\x}\in S^{+0}=\cap_{\e>0}S^{\e}$. Then this lemma follows from the sharp G\aa riding inequality (Lemma \ref{PDOfund}).

\end{proof}

\subsection{Fredholm property for the deformed operator}

Throughout of this subsection, we assume the existence of an escaping function $k\in S^0$.

Let $k$ be a symbol constructed in the above subsection.
Take an invertible operator $G_{k}$ as in Lemma \ref{Anistrolem} $(ii)$ such that the principal part of the symbol of $G_k$ is $\jap{\x}^{k(x,\x)}$.
Define 
\begin{align}\label{Ptmdef}
\tilde{P}_{k}:=G_{k}\tilde{P}G_{k}^{-1}.
\end{align}
By Lemma \ref{Anistrolem} $(iv)$ (see \cite[Lemma 3.2]{FS} for the Anosov vector field), we have
\begin{align}\label{Ptmfor}
\tilde{P}_{k}=\tilde{P}+i\Op(H_{\tilde{p}}(k\log\jap{\x}))+\Psi^{m-2+0},
\end{align}
where $\Psi^{m-2+0}=\cap_{\e>0}\Psi^{m-2+\e}$.

We consider the Banach space
\begin{align*}
\tilde{D}_{k}:=\{u\in H^{\frac{m-1}{2}}\mid \tilde{P}_{k}u\in H^{-\frac{m-1}{2}}\}
\end{align*}
equipped with the graph norm of $P_{k}$. The purpose of this subsection is to prove generic invertibility of deformed operators $\tilde{P}_k-z$ as follows.

\begin{thm}\label{Fredcor}
Consider a family of bounded operators
\begin{align}\label{Ptmmap}
\tilde{P}_{k}-z:\tilde{D}_{k}\to H^{-\frac{m-1}{2}}.
\end{align}
for $z\in\mathbb{C}$.
Then it follows that the $(\ref{Ptmmap})$ is an analytic family of Fredholm operators. Moreover, there exists a discrete subset $S\subset \mathbb{C}$ such that the map \eqref{Ptmmap} is invertible for $z\in \mathbb{C}\setminus S$.

\end{thm}

\begin{rem}
From this result, we can define the outgoing resolvent of $\tilde{P}$ as $G_k^{-1}(\tilde{P}_k-z)^{-1} G_k$ between anisotropic Sobolev spaces. Although we assume that $m>1$ here, the proof works even for $m\leq 1$ at least when $\Im z>0$ and the outgoing resolvent is just the $L^2$-resolvent of the unique self-adjoint extension of $\tilde{P}$ (see \cite[\S 4]{FS} for Anosov vector fields).

\end{rem}

\begin{lem}\label{domaindense}
$C^{\infty}(M)$ is dense in $\tilde{D}_k$.
\end{lem}

\begin{proof}

This lemma immediately follows from Lemma \ref{PDOfund} $(vi)$ with $s=\frac{m-1}{2}$ and $A=\tilde{P}_k$.
\end{proof}

For $I\Subset \re$ and $L>0$, we define
\begin{align*}
I_{L}=\{z\in \mathbb{C}\mid \Re z\in I,\,\,  \Im z>-L\}.
\end{align*}
The main results of this subsection are the following Fredholm estimates.

\begin{prop}\label{Fredholmes}
Let $I\Subset \re$ and $L>0$.
Then for any $N>0$, there exists $C>0$ such that
\begin{align}
\|u\|_{H^{\frac{m-1}{2}}}\leq& C\|(\tilde{P}_{k}-z)u\|_{H^{-\frac{m-1}{2}}}+C\|u\|_{H^{-N}}, \label{Frees1}\\
\|u\|_{H^{\frac{m-1}{2}}}\leq& C\|(\tilde{P}_{k}-z)^*u\|_{H^{-\frac{m-1}{2}}}+C\|u\|_{H^{-N}},\label{Frees2}
\end{align}
for $z\in I_L$ and $u\in \tilde{D}_{k}$. Moreover, if $\Im z>>1$, then the term $\|u\|_{H^{-N}}$ in \eqref{Frees1} and \eqref{Frees2} can be removed.
\end{prop}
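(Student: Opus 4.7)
The plan is to combine three microlocal ingredients in a Vasy-type package: (i) a radial source estimate at $L_{\mathrm{so}}$, (ii) a radial sink estimate at $L_{\mathrm{si}}$, and (iii) a dissipative bulk estimate powered by the negative imaginary part of the subprincipal symbol of $P_{te}$. At fibre infinity the characteristic set of $p$ is the discrete union $L_{\mathrm{so}}\cup L_{\mathrm{si}}$; for $z\in\overline{I_+}$ the characteristic set of $p-\Re z$ at finite frequency is compact and handled by classical elliptic and propagation estimates, so all the genuine work lies at fibre infinity.

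By $(\ref{Ptmfor})$, the subprincipal symbol of $P_{te}$ has imaginary part $tH_p(e\log\jap{\x})$, which by Lemma \ref{esces} is bounded above by $-C\jap{\x}^{m-1}+b^2$ with $\supp b$ disjoint from $U_{\e/4,4R,\mathrm{so}}\cup U_{\e/4,4R,\mathrm{si}}$. Since $\Im z\geq 0$ for $z\in\overline{I_+}$, the operator $P_{te}-z$ is dissipative of order $m-1$ off $\supp b$. Near $L_{\mathrm{so}}$, where $e\equiv 1$ and $H_p\jap{\x}<0$, this negative damping shifts the effective radial-source threshold of Theorem \ref{radso} strictly below $(m-1)/2$, so the theorem applied to $P_{te}$ yields a $Bu$-free estimate on a conic neighbourhood of $L_{\mathrm{so}}$. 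Near $L_{\mathrm{si}}$, where $e\equiv -1$ and $H_p\jap{\x}>0$, the classical above-threshold sink estimate of Theorem \ref{radsi} would demand $s>(m-1)/2+t$; instead, the negative damping places $P_{te}$ below the sink threshold, where the Vasy--Melrose mechanism converts the sink estimate into a $Bu$-free source-type estimate at level $(m-1)/2$. In the bulk, sharp G\aa rding (Lemma \ref{sharp}) applied to $(P_{te}-P_{te}^*)/(2i)-\Im z$, localised by a pseudodifferential cutoff $Q$ microsupported off $L_{\mathrm{so}}\cup L_{\mathrm{si}}$, gives $\|Qu\|_{H^{(m-1)/2}}^2\lesssim\|(P_{te}-z)u\|_{H^{-(m-1)/2}}^2+\|u\|_{H^{-N}}^2$; the $b^2$ remainder in Lemma \ref{esces} contributes only a bulk-supported error, which is reabsorbed on the left.

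A microlocal partition of unity adapted to the decomposition into neighbourhoods of $L_{\mathrm{so}}$, $L_{\mathrm{si}}$, and the bulk glues these three estimates into $(\ref{Frees1})$. The adjoint estimate $(\ref{Frees2})$ is symmetric: since $P$ is formally self-adjoint and $A_{te}$ is self-adjoint, $(P_{te})^*$ is conjugate to $P$ in the same manner with escape function $-e$, so its subprincipal imaginary part is $-tH_p(e\log\jap{\x})$ with the same support properties, and the roles of $L_{\mathrm{so}}$ and $L_{\mathrm{si}}$ interchange in the radial analysis while the bulk remains dissipative. For $\Im z\gg 1$, the dissipative bulk estimate picks up an extra $\Im z\cdot\|u\|_{L^2}^2$ on the left that dominates $\|u\|_{H^{-N}}^2$, so the error term is absorbed outright. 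The main obstacle is the conversion of the classical sink estimate into a $Bu$-free estimate at $L_{\mathrm{si}}$; this depends critically on the damping being present on an entire conic neighbourhood $U_{\e/2,2R,\mathrm{si}}$ of the sink, which is precisely why $e$ is designed to be identically $-1$ there rather than only at $L_{\mathrm{si}}$ itself.
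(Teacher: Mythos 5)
Your architecture (radial source estimate at $L_{\mathrm{so}}$, radial sink estimate at $L_{\mathrm{si}}$, dissipative bulk, microlocal gluing) is a genuinely different route from the paper's, which never invokes Theorem \ref{radso} or Theorem \ref{radsi} here: the paper reduces to $u\in C^{\infty}(\tor)$ by Lemma \ref{domaindense}, computes $\Im (u,(P_{te}-z)u)$, and uses the fact that the conjugation term $it\Op(H_p(e\log\jap{\x}))$ has a \emph{globally} negative symbol up to $b^2$ (Lemma \ref{esces}), so a single sharp G\"arding application (Lemma \ref{escpos}) puts $Ct\|u\|^2_{H^{(m-1)/2}}+\Im z\,\|u\|^2_{L^2}$ on the good side; the only microlocal input is one elliptic estimate for $\|Bu\|_{L^2}$, available because $b$ is supported away from the radial sets, hence in the elliptic set of $p$ at fibre infinity. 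That global positive-commutator argument is what delivers the uniformity in $\Re z$ and the removal of $\|u\|_{H^{-N}}$ for $\Im z\gg 1$ essentially for free.

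The genuine gap in your version is at the sink. There is no mechanism by which sub-threshold damping yields a $Bu$-free estimate at a radial sink: in the commutator proof of the sink estimate the control term comes from $H_p\chi$ for the angular localizer $\chi$, which is nonnegative on a punctured neighbourhood of $L_{\mathrm{si}}$ and of the same order $\jap{\x}^{2s}$ as the damping term $t\,a^2H_p(e\log\jap{\x})$; since $|H_p\chi|/\chi$ blows up at $\pa\supp\chi$, the damping cannot absorb it no matter how $e$ is designed, and the correctly applied Theorem \ref{radsi} for $P_{te}$ (whose effective threshold at $L_{\mathrm{si}}$ is shifted to $\tfrac{m-1}{2}+t$, so $s=\tfrac{m-1}{2}$ is admissible) still carries $\|Bu\|_{H^s}$ with $B$ microsupported on a punctured neighbourhood of $L_{\mathrm{si}}$. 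What actually closes the argument is not the damping but the geometry: the characteristic set of $p$ at fibre infinity consists exactly of the isolated radial points, so that punctured neighbourhood lies in the elliptic set and $\|Bu\|$ is absorbed by $\|(P_{te}-z)u\|_{H^{-(m-1)/2}}+\|u\|_{H^{-N}}$ --- the same elliptic step the paper uses for its $b$. If you replace your claimed mechanism by this elliptic absorption (and record the a priori regularity $u\in H^{s_0+0}$ with $s_0\in(\tfrac{m-1}{2}-t,\tfrac{m-1}{2})$ needed for Theorem \ref{radso}, supplied by $u\in\tilde{D}_{te}\subset H^{(m-1)/2}$), your patchwork does yield $(\ref{Frees1})$ and, by the symmetric computation for $P_{te}^*$, $(\ref{Frees2})$.
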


\begin{rem}
For $m\leq 1$, the above Fredholm estimates hold when $z\in I_0$.
\end{rem}

\begin{proof}[Proof of Proposition \ref{Fredholmes}]
We only deal with $(\ref{Frees1})$.  The inequality $(\ref{Frees2})$ is similarly proved.
By virtue of Lemma \ref{domaindense}, it suffices to prove $(\ref{Frees1})$ for $u\in C^{\infty}(M)$. 

Since $\tilde{P}$ is symmetric, we have $\Im (u,(\tilde{P}-z)u)_{L^2}=-\Im z\|u\|_{L^2}^2$.
By Lemma \ref{escpos} and \eqref{Ptmfor}, there exist $C_2,C_3>0$ such that
\begin{align*}
\Im (u, (\tilde{P}_{k}-z)u)_{L^2}\leq&(u,\Op(H_{\tilde{p}}(k\log\jap{\x}))u)-\Im z\|u\|_{L^2}^2+C_2\|u\|_{H^{\frac{m-2}{2}+0}}^2\\
\leq&-C_0\|u\|_{H^{\frac{m-1}{2}}}^2-\Im z\|u\|_{L^2}^2+C_1\|Du\|_{L^2}^2+C_3\|u\|_{H^{\frac{m-2}{2}+0}}^2.
\end{align*}
Due to Cauchy-Schwarz inequality and the interpolation inequality, there exists $C_4>0$ such that
\begin{align*}
\|u\|_{H^{\frac{m-2}{2}+0}}^2\leq& \frac{C_0}{4C_3}\|u\|_{H^{\frac{m-1}{2}}}^2+C_4\|u\|_{H^{-N}}^2,\\
\left|\Im (u,(\tilde{P}_{k}-z)u)_{L^2} \right|\leq& \frac{C_0}{4}\|u\|_{H^{\frac{m-1}{2}}}^2+C_4\|(\tilde{P}_{k}-z)u\|_{H^{-\frac{m-1}{2}}}^2.
\end{align*}
By the support property of $d$ (Definition \ref{esces}) and the elliptic estimate of $\tilde{P}_k$, there exists $C_5>0$ such that
\begin{align*}
\|Du\|_{L^2}\leq C_5\|(\tilde{P}_{k}-z)u\|_{H^{-\frac{m-1}{2}}}+C_5\|u\|_{H^{-N}}\quad \text{for}\quad z\in I_L.
\end{align*}
Combining these four estimates, we obtain
\begin{align}\label{Freespf1}
\|u\|_{H^{\frac{m-1}{2}}}^2+(\Im z) C_6\|u\|_{L^2}^2\leq C_7\|(\tilde{P}_{k}-z)u\|_{H^{-\frac{m-1}{2}}}^2+C_8\|u\|_{H^{-N}}^2 \quad \text{for}\quad z\in I_L,
\end{align}
where $C_6=2C_0^{-1}>0$, $C_7=2(C_4+C_1C_5)C_0^{-1}>0$ and $2C_8=(C_1C_5+C_3C_4)C_0^{-1}>0$. Now we observe that $N>0$ implies $\|u\|_{H^{-N}}\leq \|u\|_{L^2}$. Therefore, we obtain $\|u\|_{H^{\frac{m-1}{2}}}^2\leq C_7\|(\tilde{P}_{k}-z)u\|_{H^{-\frac{m-1}{2}}}^2$ by taking $z$ as $\Im z>C_8/C_6$. This proves the last statement of this propsition.

Finally, we prove \eqref{Frees1} for $z\in I_L$.
The assumption $m>1$ and the Cauchy-Schwartz inequality imply
\begin{align*}
-\Im z\|u\|_{L^2}^2\leq (2C_6)^{-1}\|u\|_{H^{\frac{m-1}{2}}}^2+C_9\|u\|_{H^{-N}}^2
\end{align*}
with $C_9>0$ uniformly in $z\in I_L$. Then our estimate \eqref{Frees1} follows from this inequality and \eqref{Freespf1}.

\end{proof}

\begin{proof}[Proof of Theorem \ref{Fredcor}] 
First, we prove that $\Ker (\tilde{P}_k-z)<\infty$ is of finite dimension and $\Ran (\tilde{P}_k-z)$ is closed. Let $u_{\ell} \in\tilde{D}_{k}$ be a bounded sequence such that $(\tilde{P}_{k}-z)u_{\ell}$ is convergent in $H^{\frac{m-1}{2}}$. Due to \cite[Proposition 19.1.3]{Ho}, it suffices to prove that $u_{\ell}$ has a convergent subsequence in $\tilde{D}_{k}$. It easily follows from \eqref{Frees1} and the compactness of the inclusion $H^{\frac{m-1}{2}}\subset H^{-N}$ due to the Rellich-Kondrachov theorem. 

Next, we prove that the cokernel of $\tilde{P}_k-z$ is of finite dimension. To do this, it suffices to prove that the kernel of $(\tilde{P}_k-z)^*:  H^{\frac{m-1}{2}}\to \tilde{D}_{k}^*$ is of finite dimension.  
By the definition of the adjoint, we have
\begin{align*}
\Ker (\tilde{P}_k-z)^*=&\{u\in H^{\frac{m-1}{2}}\mid  (u,(\tilde{P}_k-z)w)_{L^2}=0,\, \forall w\in \tilde{D}_k \}\\
=&\{u\in H^{\frac{m-1}{2}}\mid  (u,(\tilde{P}_k-z)w)_{L^2}=0,\, \forall w\in C^{\infty}(M) \},
\end{align*}
where we use Lemma \ref{domaindense} in the second line. From this, $u\in H^{\frac{m-1}{2}}$ satisfies $(\tilde{P}_k-z)^*u=0$, then this equality holds in the distributional sense. The claim follows same as in the first half part of the proof. Therefore, $\tilde{P}_k-z$ is a Fredholm operator.

Due to Proposition \ref{Fredholmes}, we have $\|u\|_{H^{\frac{m-1}{2}}}\leq C\|(\tilde{P}_{k}-z)u\|_{H^{-\frac{m-1}{2}}}$ and $\|u\|_{H^{\frac{m-1}{2}}}\leq C\|(\tilde{P}_{k}-z)^*u\|_{H^{-\frac{m-1}{2}}}$ for $\Im z\gg 1$. This implies that $\Ker$ and $\mathrm{Coker}$ are the zero space. Hence $\tilde{P}_k-z$ is invertible for $\Im z\gg 1$. The analytic Fredholm theorem \cite[Theorem D.4]{Z} imply that the Fredholm index of $P_k-z$ is zero and there is a discrete set $S$ as desired.

\end{proof}

\section{The structure of the symbol}\label{sectionsymbol}

Now we return to the discussion in the case on the torus.
From now on, we  consider $P\in \Psi_{\mathrm{phg}}^m$ and $V\in \Psi_{\mathrm{phg}}^{m-\k}$ on $\tor$ satisfying the assumption in Theorem \ref{mainess}. Define
\begin{align*}
p=\s(P),\quad v=\s(V),
\end{align*}
which are real-valued by Proposition \ref{prireal}.

\subsection{The structure of the symbol}

Set $a_{\pm}(x):=p(x,\pm 1)$. Since $p$ is homogeneous of order $m$, we have 
\begin{align}\label{prepresent}
p(x,\x)=a_{\pm}(x)|\x|^m\quad \text{for}\quad x\in\tor,\,\, \pm \x\geq 1.
\end{align}
 for $x\in \tor$ and $\pm \x\geq 1$. 
We define
\begin{align}
&Z=Z_+\cup Z_-,\,\, Z_{\pm}=\{x\in \tor\mid a_{\pm}(x)=0\},\,\, Z_{\ast,\pm}=\{x\in \tor\mid a_{\ast}(x)=0,\, \pm  a_{\ast}'(x)>0\} \label{zero}
\end{align}
for $\ast\in \{\pm\}$. By the condition $(\ref{princond})$, it turns out that $Z$ is a finite set.

\begin{lem}\label{prilem}

\noindent$(i)$ $p$ is elliptic if and only if $Z=\emptyset$.

\noindent$(ii)$ Then $p$ is real principal type if and only if 
\begin{align}\label{princond}
a_{\pm}(x)= 0\Rightarrow a_{\pm}'(x)\neq 0.
\end{align}

\end{lem}

\begin{proof}

\noindent $(i)$ This part is easily proved.

\noindent $(ii)$
This part follows from the following observation:
\begin{align*}
&(x,\x)\in \{p=0\}\Leftrightarrow a_{\pm}(x)=0,\\
&(x,\x)\in \{dp\neq 0\}\Leftrightarrow a_{\pm}(x)\neq 0\,\, \text{or}\,\, a_{\pm}'(x)\neq 0.
\end{align*}

\end{proof}

\begin{rem}
Actually, we have
\begin{align}\label{zeroeq}
\# Z_{\ast, +}=\# Z_{\ast, -}\quad \text{for}\quad \ast\in \{\pm\}.
\end{align}
To see this, let $x\in Z_{\ast, +}$. We consider this problem on the universal cover (and identify the function $a_{*}$ as a periodic function on $\re$). Let 
\begin{align*}
y=\min \{z\in (x,\infty)\mid a_{\ast}(z)=0\}.
\end{align*}
Since $a_{\ast}$ is periodic, we have $y\leq x+2\pi$. It follows from $x\in Z_{\ast,+}$ that $a_{\ast}(z)>0$ for $z\in (x,y)$. Since $a_*'(y)\neq 0$, then $a_{\ast}$ does not attain the minimum at $y$. This implies $a_*(z)<0$ on a small interval $(y,y+\e)$. Hence we have $a_*'(y)<0$. Consequently, we obtain $\#Z_{\ast, +}\leq \# Z_{\ast, -}$. The inequality $\#Z_{\ast, -}\leq \# Z_{\ast, +}$ is similarly proved.
 We also note that the real principal type condition is necessary for $(\ref{zeroeq})$. In fact, the function $a(x)=1+\sin x$ has a unique zero at $x=\frac{3}{2}\pi$, however, the function $a$ satisfies $a(\frac{3}{2}\pi)=a'(\frac{3}{2}\pi)=0$.

\end{rem}

\subsection{Classical trajectories for real principal type operators}\label{subseciandiii}

In this subsection, we shall show that the equivalence $(i)\Leftrightarrow (iii)$ in Theorem \ref{mainess}.

\begin{proof}[Proof of Theorem \ref{mainess}, $(i)\Leftrightarrow (iii)$]

First, we show $(iii)\Rightarrow (i)$ by a contradiction argument. We assume that $(iii)$ holds and there exists an integral curve $(y(t),\y(t))$ of the vector field $H_p$ with maximal extension time $T<\infty$. Since $\tor$ is closed, this implies $|\y(t)|\to \infty$ as $t\to T$ due to the maximal extension theorem of ODE. 

If $m\leq 1$ holds, then $C:=\sup_{(x,\x)\in T^*\tor}(1+|\x|)|\pa_{x}p(x,\x)|<\infty$. On the other hand,  by the second equation of the Hamilton equation $\y'(t)=-(\pa_xp)(y(t),\y(t))$,
we have
\begin{align*}
|\y'(t)|\leq C(1+|\y'(t)|).
\end{align*}
This contradicts to the assumption $|\y(t)|\to \infty$ as $t\to T$ and the Gronwall inequality.

If $p$ is elliptic, then $|p(x,\x)|\geq C_1|\x|^m-C_2$ by definition. The conservation law $p(y(t),\y(t))=p(y(0),\y(0))$ implies $C_1|\y(t)|^m\leq p(y(0),\y(0))+ C_2$ for all $t$, which contradicts to the assumption $|\y(t)|\to \infty$ as $t\to T$. This completes the proof of $(iii)\Rightarrow (i)$.

Next, we prove $(i)\Rightarrow (iii)$. By Lemma \ref{prilem}, it suffices to find an incomplete trajectory of $H_0$ assuming that $m>1$ and $Z\neq \emptyset$. We deal with the case of $Z_{+,+}\neq \emptyset$ only and the other cases are similarly treated.

Let $x_0\in Z_{+,+}$ and consider the trajectory $(y(t), \y(t))$ with initial data $(y(0),\y(0))=(x_0,2)$. Since $a(x_0)=0$ and the first equation of the Hamilton equation is given by $y'(t)=(\pa_{\x}p)(y(t),\y(t))$, we have $y(t)=x_0$ for all $t$ by Lemma \ref{prilem}. Moreover, $-\pa_{x}p(x_0,\x)=-a_+'(x_0)|\x|^m\leq 0$ for $\x\geq 1$ by the definition of $Z_{+,+}$.
By the second equation of the Hamilton equation $\y'(t)=-(\pa_xp)(x_0,\y(t))$, it follows that $|\y(t)|\geq 1$ for $t\leq 0$.
In particular, $\y'(t)=-a_+'(x_0)|\y(t)|^m$ for $t\leq 0$ by Lemma \ref{prilem}. Since $a'(x_0)>0$ and $m>1$, this equation blows up at a negative finite time. This completes the proof of $(i)\Rightarrow (iii)$.

\end{proof}

\begin{rem}
The proof of $(iii)\Rightarrow (i)$ works on general closed manifolds.

\end{rem}

\subsection{Construction of an escaping function}\label{escapesubst}

We define conic Lagrangian submanifolds $\L_{\mathrm{inc}}/\L_{\mathrm{out}}$ of $T^*\tor$ by
\begin{align*}
\L_{\mathrm{inc}}:=\left(Z_{+,+}\times (0,\infty)\right)\cup \left(Z_{-,-}\times (-\infty,0)\right),\,\, \L_{\mathrm{out}}:=\left(Z_{+,-}\times (0,\infty)\right)\cup \left(Z_{-,+}\times (-\infty,0)\right),
\end{align*}
where we identify $T^*\tor$ as $\tor \times \re$ and $Z_{\ast,\pm}$ are defined in \eqref{zero}. We note that $\L_{\mathrm{inc}}\cup \L_{\mathrm{out}}$ corresponds to the characteristic set of $p$. In other word, $p$ is elliptic away from there.

\begin{rem}
It turns out that $\L_{\mathrm{inc}}$ (resp. $\L_{\mathrm{out}}$) or more precisely its fiber infinity is a radial source (resp. a radial sink) of $P$ in the sense of \cite[Definition E.50]{DZ} although we do not use this fact here. One of the important property of $\L_{\mathrm{inc}}/\L_{\mathrm{out}}$ is that $H_p$ is parallel to the radial vector field $\x\pa_{\x}$:
\begin{align*}
H_p|_{(\L_{\mathrm{inc}}\cup \L_{\mathrm{out}})\cap \{|\x|\geq 1\}}=(-a'(x) \sgn\x)\cdot |\x|^{m-1}\cdot\left(\x\pa_{\x}\right)
\end{align*}
and its coefficient $-a'(x) \sgn\x$ is positive on $\L_{\mathrm{inc}}$ and negative on $\L_{\mathrm{out}}$.

\end{rem}

In our case, it is easy to construct an escaping function of $P+V$.

\begin{lem}\label{escexist}
Let $\e_0>0$ and $k\in S^0$ such that $|k(x,\x)|\leq \e_0$ holds everywhere and 
\begin{align*}
k(x,\x)=\begin{cases}
\e_0\quad \text{near} \quad (x,\x)\in \L_{\mathrm{inc}}\cap \{|\x|\geq 1\},\\
-\e_0\quad \text{near} \quad (x,\x)\in \L_{\mathrm{out}}\cap \{|\x|\geq 1\}.
\end{cases}
\end{align*}
Then $k$ is an escaping function of $P+V$ in the sense of Definition \ref{esces}.

\end{lem}

\begin{proof}
It suffices to prove that there exist $C>0$ and $R>0$ such that
\begin{align}\label{escexistpf1}
H_{p+v}(k(x,\x)\log\jap{\x})\leq -C\jap{\x}^{m-1}
\end{align}
if $(x,\x)$ is away from $(\L_{\mathrm{inc}}\cup \L_{\mathrm{out}}) \cap \{\pm \x\geq 1\}$ and $|\x|\geq R$ since $p$ is elliptic away from $\L_{\mathrm{inc}}\cup \L_{\mathrm{out}}$.

We write $p(x,\x)=a_{\pm}(x)|\x|^m$ for $\pm\x\geq 1$ as in Lemma \ref{prilem}. 
By Leibniz's rule, we have $H_p(k(x,\x)\log\jap{\x})=k(x,\x)H_p\log\jap{\x}+(H_pk)(x,\x)\log\jap{\x}$.
 By the choices of $k$ and the definition of $\L_{\mathrm{inc}},\L_{\mathrm{out}}$, there exists $c>0$ such that
\begin{align*}
k(x,\x)H_p\log\jap{\x}=-k(x,\x)a_{\pm}'(x)|\x|^m\x\jap{\x}^{-1}\leq -c\jap{\x}^{m-1}
\end{align*}
near $(\L_{\mathrm{inc}}\cup \L_{\mathrm{out}}) \cap \{\pm \x\geq 1\}$. On the other hand, $H_pk$ is supported away from $(\L_{\mathrm{inc}}\cup \L_{\mathrm{out}}) \cap \{\pm \x\geq 1\}$. Moreover, we have $H_v(k\log\jap{\x})\in S^{m-1-\k+0}$, which implies the existence of $C>0$ such that $H_v(k(x,\x)\log\jap{\x})\leq C\jap{\x}^{m-1-\frac{\k}{2}}$. Taking $R>0$ sufficiently large, we have $H_v(k(x,\x)\log\jap{\x})\leq \frac{c}{2}\jap{\x}^{m-1}$ for $|\x|\geq R$. Thus \eqref{escexistpf1} holds for $C=\frac{c}{2}$.

\end{proof}

\section{Relationship between essential self-adjointness and the order of the operator}\label{firstpf}
In this section, we apply the method developed in \cite{T} to our operator $P$ and prove Theorem \ref{mainess} $(ii)\Leftrightarrow (iii)$.

\subsection{Proof of Theorem \ref{mainess} $(iii)\Rightarrow (ii)$}\label{subseciiitoii}

If $p$ is elliptic, then the elliptic regularity of $P+V$ implies $D_{\mathrm{max}}(P+V)\subset H^{m}(\tor)$, where $D_{\mathrm{max}}$ is defined in \eqref{Dmaxdef}. Now the essential self-adjointness for $P+V$ on $C^{\infty}(\tor)$ follows from Proposition \ref{esscri1}. 

If $m\leq 1$, then Proposition \ref{esscri1} implies the essential self-adjointness for $(P+V)|_{C^{\infty}(\tor)}$ (see also \cite[Lemma 4.1]{CB} or \cite[Lemma A.1]{FS}).  This completes the proof of Theorem \ref{mainess} $(iii)\rightarrow (ii)$.

In the following of this section, we shall prove Theorem \ref{mainess} $(ii)\rightarrow (iii)$. To do this, assuming that $m>1$ and that $p$ is not elliptic, we construct non-trivial $L^2$-eigenfunctions for $P$ with non-real eigenvalues in the remainder of this section.

\subsection{WKB construction}

In the rest of this section, we assume
\begin{align*}
m>1.
\end{align*}

In this subsection, we construct an approximate eigenfunction of $P$ whose wavefront set lies in the incoming region (the radial source) of $p$. The following construction below is more or less standard in WKB theory when $V=0$. The treatment of the long-range perturbation $V$ is inspired by that of the spherical waves in the long-range scattering theory \cite{GY} (see also \cite{IK} for the construction of the plane waves, so called the Isozaki-Kitada parametrix).
In \cite{W}, a similar construction on closed surfaces is given when $\k=1$.

Define
\begin{align}\label{deltadef}
\d:=\min(2\k,m)>1
\end{align}
where we recall that $m>1$ $\k>\frac{1}{2}$ are real numbers such that $P\in \Psi^m_{\mathrm{phg}}$ and $V\in \Psi^{m-\k}_{\mathrm{phg}}$.

\begin{prop}\label{WKBthm}
Let $z\in \mathbb{C}$ and $x_0\in Z_{++}$, where $Z_{++}$ is defined in \eqref{zero}. Then there exists $u_{\mathrm{inc},z}\in C^{\infty}(\tor\setminus \{x_0\})\cap H^{\frac{m-1}{2}-0}$ such that
\begin{align}\label{u_-reg}
(P+V-z)u_{\mathrm{inc},z}\in H^{-\frac{m+1}{2}+2\d-0},\,\, u_{\mathrm{inc},z}\notin H^{\frac{m-1}{2}},\,\, \mathrm{WF}^{\frac{m-1}{2}}(u_{\mathrm{inc},z})= \L_{x_0,+},
\end{align}
where $\L_{x_0,+}\subset \L_{\mathrm{inc}}$ is defined in \eqref{simpleLag}.

\end{prop}

\begin{rem}

$(1)$
Similar results hold for $x_0\in Z_{-,-}$ replacing the wavefront condition by
\begin{align*}
\mathrm{WF}(u)\subset \L_{x_0,-}.
\end{align*}

$(2)$ Under our assumption $m>1$, the spectral parameter $z$ can be regarded as a short-range (subsubprincipal) perturbation.

\end{rem}

\begin{proof}

We find $u_{\mathrm{inc},z}\in  I^{-\frac{m}{2}+\frac{1}{4}}_{\k}(\tor,\L_{x_0,+})\setminus H^{\frac{m-1}{2}}$ such that 
\begin{align*}
(P+V-z)u_{\mathrm{inc},z}\in I^{\frac{m}{2}+\frac{1}{4}-\d}_{\k}(\tor,\L_{x_0,+}).
\end{align*} 
By Corollary \ref{stdsubprireal}, the subprincipal symbol $\s_{\mathrm{sub}}(P)$ of $P$ is real-valued in a standard coordinate, where we recall that a standard coordinate is defined in Definition \ref{stdcoord} \footnote{If we consider the operator acting on half-densities instead of functions, then the subprincipal symbol is always real-valued by \cite[Theorem 18.1.34]{Ho}. Here, we avoid using half-densities.}. We define a real-valued function $q$ by
\begin{align*}
q:=\s_{\mathrm{sub}}(P)+v\in S^{m-\k}
\end{align*}
in standard coordinates, where we recall $v=\s(V)\in S^{m-\k}$.

We take $b\in S_{\k,+}^{-\frac{m}{2}}(\re)$ as 
\begin{align}\label{b_0deftr}
b(\x):=\x^{-\frac{m}{2}}e^{i\g(\x)},\quad \g(\x):=\frac{1}{a_+'(x_0)}\int_1^{\x}\frac{q(x_0,\y)}{\y^m}d\y \quad \text{for}\,\,  \x\geq 1,
\end{align}
where we recall that $a_+$ is defined in \eqref{prilem}. We note that the symbolic property of $b_0$ is a consequence of $\g\in S^{1-\k}$ and the fact that $\g$ is real-valued.
Moreover, it is easy to see that $b_0$ satisfies the first transport equation
\begin{align}\label{b_0transeq}
i^{-1}(\mathcal{L}_{H_p}b)(\x)+q(x_0,\x)b(\x)=0\quad \text{for}\,\, \pm \x\geq 1,
\end{align}
where we recall $(\mathcal{L}_{H_p}b)(\x)=-(\pa_xp)(x_0,\x)\pa_{\x}b(\x)-\frac{1}{2}(\pa_x\pa_{\x}p)(x_0,\x)b(\x)$ from Lemma \ref{Laglem}. 
Now we define $u_{\mathrm{inc},z}$ by
\begin{align}\label{u_0rep}
u_{\mathrm{inc},z}(x)=\int_{\re}b(\x)e^{i(x-x_0)\x}d\x
\end{align} 
in a standard coordinate near $x_0$ and $u_{\mathrm{inc},z}$ is a smooth function away from $x_0$. Then we have $u_{\mathrm{inc},z}\in  I^{-\frac{m}{2}+\frac{1}{4}}_{\k}(\tor,\L_{x_0,+})$ by the definition of Lagrangian distributions. 

Next, we see that $(P+V-z)u_{\mathrm{inc},z}\in  I^{\frac{m}{2}+\frac{1}{4}-\d}_{\k}(\tor,\L_{x_0,+})$. Since
\begin{align*}
zu_{\mathrm{inc},z}\in  I^{-\frac{m}{2}+\frac{1}{4}}_{\k}(\tor,\L_{x_0,+})=I^{\frac{m}{2}+\frac{1}{4}-m}_{\k}(\tor,\L_{x_0,+})\subset  I^{\frac{m}{2}+\frac{1}{4}-\d}_{\k}(\tor,\L_{x_0,+})
\end{align*}
by the choice of $\d>1$: \eqref{deltadef}, it suffices to prove $(P+V)u_{\mathrm{inc},z}\in  I^{-\frac{m}{2}+\frac{1}{4}-\d}_{\k}(\tor,\L_{x_0,+})$. By Lemma \ref{Laglem}, there exists $\tilde{w}_1\in I^{\frac{m}{2}+\frac{1}{4}-2\k}_{\k}(\tor,\L_{x_0,+})$ such that 
\begin{align*}
(P+V)u_{\mathrm{inc},z}=\int_{\re}(i^{-1}\left(\mathcal{L}_{H_p}b)(\x)+q(x_0,\x)b(\x)\right)e^{i(x-x_0)\x}d\x+\tilde
{w}_1
\end{align*}
in the same standard coordinate. Since the integrant of the first term is zero for $|\x|\geq 1$ by the choice of $b$, we conclude $(P+V)u_{\mathrm{inc},z}\in  I^{\frac{m}{2}+\frac{1}{4}-2\k}_{\k}(\tor,\L_{x_0,+})\subset  I^{\frac{m}{2}+\frac{1}{4}-\d}_{\k}(\tor,\L_{x_0,+})$ by \eqref{deltadef}.

By the property of the Lagrangian distributions (Lemma \ref{Laglem}), we see that $u_{\mathrm{inc},z}\in C^{\infty}(\tor\setminus \{x_0\})\cap H^{\frac{m-1}{2}-0}$, $(P+V)u_{\mathrm{inc},z}\in H^{-\frac{m+1}{2}+2\d-0}$ and $\mathrm{WF}(u_{\mathrm{inc},z})\subset \L_{x_0,+}$. Thus we only need to prove 
that $u_{\mathrm{inc},z}\notin H^{\frac{m-1}{2}}$. Since $u_{\mathrm{inc},z}$ is smooth except at $x_0$, it suffices to that $u_{\mathrm{inc},z}\notin H^{\frac{m-1}{2}}$ near $x_0$. By the fundamental property of the Fourier transform and \eqref{u_0rep}, this is equivalent to $b\notin \jap{\x}^{-\frac{m-1}{2}}L^2(\re)$, which directly follows from \eqref{b_0deftr}. This completes the proof.

\end{proof}

\subsection{Existence of generalized eigenfunctions}

In this subsection, we construct generalized eigenfunctions of $P+V$ in order to show Theorem \ref{mainess} $(ii)\Rightarrow (iii)$.

\begin{prop}\label{eigenconst}
Let $S$ be the discrete set constructed in Theorem \ref{Fredcor} and $z\in \mathbb{C}\setminus S$. Then there exists $u_z\in L^2\setminus\{0\}$ such that $(P+V-z)u_z=0$ in the distributional sense.
\end{prop}

\begin{rem}
More strongly, we have $u_z\in H^{\frac{m-1}{2}-0}$ by Proposition \ref{propradsink}.
\end{rem}

\begin{proof}
Let $\e_0>0$ small enough such that
\begin{align}\label{epsilon}
\e_0<\min\left(\frac{m-1}{2},2\d-1 \right),
\end{align}
where $\d>\frac{1}{2}$ is defined in \eqref{deltadef}. Then we can take an escaping function $k$ of $P+V$ by Lemma \ref{escexist} for such $\e$. We define
\begin{align*}
R_+(z):=G_k^{-1} (P_k-z)^{-1}G_k,\quad z\in\mathbb{C}\setminus S
\end{align*}
where $G_k$ is defined just before \eqref{Ptmdef} and $(P_k-z)^{-1}: H^{-\frac{m-1}{2}} \to \tilde{D}_k$ is defined in Theorem \ref{Fredcor}.

Let $u_{\mathrm{inc},z}\neq 0$ as in Proposition \ref{WKBthm}. We observe that $(P+V-z)u_{\mathrm{inc},z}\in H^{-\frac{m}{2}-1+2\d-0}$ by Theorem \ref{WKBthm} and $k(x,\x)\in [-\e_0,\e_0]$. Then Lemma \ref{Anistrolem} $(i)$ yields
\begin{align*}
G_k(P+V-z)u_{\mathrm{inc},z}\in H^{-\frac{m+1}{2}+2\d-\e-0}\subset H^{-\frac{m-1}{2}},
\end{align*}
due to the choices of $\e$ and $\d$: \eqref{deltadef} and \eqref{epsilon}.
 Hence 
\begin{align*}
u_{\mathrm{out},z}:=-R_+(z)(P+V-z)u_{\mathrm{inc},z},\quad u_z:=u_{\mathrm{out},z}+u_{\mathrm{inc},z}.
\end{align*}
are well-defined and satisfy
\begin{align*}
(P+V-z)u_{z}=0.
\end{align*}
We observe that the range of $(P_k-z)^{-1}$ is $\tilde{D}_k$ and $\tilde{D}_k\subset H^{\frac{m-1}{2}}$, which imply
\begin{align}\label{G_kuoutreg}
G_ku_{\mathrm{out},z}\in H^{\frac{m-1}{2}}.
\end{align}
Combining $|k(x,\x)|\leq \e_0< \frac{m-1}{2}$ and Lemma \ref{Anistrolem} $(i)$, we obtain $u_{\mathrm{out},z}\in L^2$. Since $u_{\mathrm{inc},z}\in H^{\frac{m-1}{2}-0}\subset  L^2$ by Proposition \ref{WKBthm} and $m>1$, we conclude
\begin{align*}
u_z\in L^2.
\end{align*}

It remains to prove that $u_z\neq 0$. We recall from Theorem \ref{WKBthm} that $\mathrm{WF}(u)\subset \L_{x_0,+}$ with $x_0\in Z_{+,+}$. We observe that $k(x,\x)=\e_0$ near $\L_{x_0,+}\cap \{|\x|\geq 1\}$ by Lemma \ref{escexist}. 
Due to \eqref{G_kuoutreg}, we can employ Lemma \ref{Anistrolem} $(iv)$ with $\L=\L_{x_0,+}$. Then we obtain $u_{\mathrm{out},z}\in H^{\frac{m-1}{2}+\e}\subset H^{\frac{m-1}{2}}$ microlocally near $\L_{x_0,+}$, that is $\mathrm{WF}^{\frac{m-1}{2}}(u_{\mathrm{out},z})\cap \L_{x_0,+}=\emptyset$. On the other hand, $\mathrm{WF}^{\frac{m-1}{2}}(u_{\mathrm{inc},z})=\L_{x_0,+}$ by \eqref{u_-reg}. Therefore, $u_z=u_{\mathrm{out},z}+u_{\mathrm{inc},z}\neq 0$. This completes the proof.

\end{proof}

\subsection{Proof of Theorem \ref{mainess} $(ii)\Rightarrow (iii)$}\label{subseciitoiii}

It suffices to show that $m>1$ implies $(P+V)|_{C^{\infty}(\tor)}$ is not essentially self-adjoint. By Proposition \ref{eigenconst}, for each $z\in \mathbb{C}\setminus S$, there exists $u_z\in L^2\setminus \{0\}$ such that $(P+V-z)u_z=0$ in the distributonal sense. Since $S$ is a discrete set, then Proposition \ref{Esscricor} shows that $(P+V)|_{C^{\infty}(\tor)}$ is not essentially self-adjoint. This proves $(ii)\Rightarrow (iii)$ in Theorem \ref{mainess}.

\section{Proof of Corollary \ref{examplecor}}\label{Sectionproofcor}

\begin{proof}[Proof of Corollary \ref{examplecor} $(i)$]
By using Fourier expansion in $y$, we have
\begin{align*}
\Box_g=\oplus_{n\in \mathbb{Z}}(\a\pa_{x}(\sin x\pa_x)-2nD_{x})\quad \text{on}\quad \oplus_{n\in \mathbb{Z}}L^2(\tor_x).
\end{align*}
From Theorem \ref{mainess}, it follows that $\a\pa_{x}(\sin x\pa_x)$ (with $\a\in \re\setminus \{0\}$) is not essentially self-adjoint on $C^{\infty}(\tor)$ and hence have a non-trivial $L^2$-eigenfunction $u$ with the eigenvalue $i$. Then it turns out that the function $w(x,y)=u(x)\in L^2(\tor^2)$ satisfies $(\Box_g-i)w=0$. By Proposition \ref{esscrieigen}, we conclude that $\Box_g$ is not essentially self-adjoint on $C^{\infty}(\tor^2)$.
\end{proof}

\begin{proof}[Proof of Corollary \ref{examplecor} $(ii)$]

We consider the Lorentzian metric $g=(x^2+y^2)^{-1}(2dxdy)$ on $\re^2\setminus \{0\}$. For $\l>0$,
\begin{align*}
M:=(\re^2\setminus \{0\})/\sim,\quad x\sim y\Leftrightarrow \exists  k\in\mathbb{Z}\,\, \text{such that}\,\, x=\l^ky.
\end{align*}
Then $g$ induces a Lorentzian metric on $M$ since $g_{\l(x,y)}=(\l^2x^2+\l^2y^2)^{-1}(2d(\l x)d(\l y))=g_{(x,y)}$ for $\l>0$. Moreover, $M$ is homeomorphic to a two-dimensional torus. We call the manifold $(M,g)$ the Clifton-Pohl torus. We assume $\l=e$ for notational simplicity and note that $\l=2$ is assumed in \cite[p.193]{O}.

We use the coordinate
\begin{align*}
x=e^t\cos\theta,\quad y=e^t\sin\theta,
\end{align*}
where $t\in [0,1]$ and $\theta\in[0,2\pi)$. We observe that the $C^{\infty}$ structure in this coordinate is more natural than the $(x,y)$-coordinate since $f\in C^{\infty}(M)$ if and only if $f(t+k,\theta+ 2\pi \ell)=f(t,\theta)$ for all $k,\ell\in\mathbb{Z}$.
Since $dx=(e^t\cos\theta) dt-(e^t\sin\theta)d\theta$ and $dy=(e^t\sin\theta) dt+(e^t\cos\theta)d\theta$, we have
\begin{align*}
g=&(2\cos\theta\sin\theta)dt^2+2(\cos^2\theta-\sin^2\theta)dtd\theta-2(\sin\theta\cos\theta) d\theta\\
=&(\sin2\theta)dt^2+2(\cos 2\theta)dtd\theta-(\sin 2\theta)d\theta^2
\end{align*}
and $\det g(t,\theta)=-1$.
Thus, the wave operator is given by
\begin{align*}
\Box_g=(\sin 2\theta)\pa_t^2+\pa_t\circ \cos2\theta \circ \pa_{\theta}+\pa_{\theta}\circ \cos2\theta \circ \pa_{t}-\pa_{\theta}(\sin 2\theta \pa_{\theta}).
\end{align*}
The $L^2$-space is $L^2([0,1]\times [0,2\pi];dtd\theta)$.

Since the operator $-\pa_{\theta}(\sin 2\theta \pa_{\theta})$ on $\mathbb{T}_{\theta}$ is not essentially self-adjoint on $C^{\infty}(\mathbb{T})$ by Theorem \ref{mainess}, we can take $u_{\pm}\in L^2(\mathbb{T})\setminus \{0\}$ such that
\begin{align*}
(-\pa_{\theta}(\sin 2\theta \pa_{\theta})\pm i)u_{\pm}(\theta)=0.
\end{align*}
due to Proposition \ref{esscrieigen}. Now we define $w_{\pm}(t,\theta):=u_{\pm}(\theta)$, which belongs to $L^2([0,1]\times [0,2\pi];dtd\theta)\setminus \{0\}$ and $(\Box_g\pm i)w_{\pm}=0$ since $w_{\pm}$ is independent of the $t$-variable and
\begin{align*}
\Box_gw_{\pm}=-\pa_{\theta}(\sin 2\theta \pa_{\theta})w_{\pm}.
\end{align*}
Using Proposition \ref{esscrieigen} again, we conclude that $\Box_g$ is not essentially self-adjoint on $C^{\infty}(M)$.

\end{proof}

\section{Spectral properties for real principal type operators}\label{secondpf}

\subsection{Absolute continuity of the spectrum}\label{subsecMourre}

In this subsection, we prove Theorem \ref{spthm} $(i)$ by using the Mourre theory \cite{M,Ge}.

\subsection*{Mourre theory}

Let us briefly review the Mourre theory.

If $H,A$ are self-adjoint operators, then $[H,A]$ is defined as a quadratic form on $D(H)\cap D(A)$ by $(u,[H,A]w):=(Hu,Aw)-(Au,Hw)$ for $u,w\in D(H)\cap D(A)$. If we further assume that $A$ is a bounded operator, then $(H-i)^{-1}[H,A](H-i)^{-1}$ can be regarded as a bounded operator on $\mathcal{H}$ and $[A,(H-i)^{-1}]=(H-i)^{-1}[H,A](H-i)^{-1}$ holds.

\begin{defn}
Let $A$ be a self-adjoint operator and $B$ be a bounded operator on a Hilbert space $\mathcal{H}$. We call $B\in C^1(A)$ if a quadratic form $[A, B]$ on $D(A)\times D(A)$ can be extended to a bounded operator from $D(A)$ to $\mathcal{H}$. We call $B\in C^2(A)$ if $B\in C^1(A)$ and $[A,B]\in C^1(A)$.

Let $H$ be a (not necessary bounded) self-adjoint operator. For $k=1,2$, we call $H\in C^k(A)$ if $(H-i)^{-1}\in C^k(A)$.

\end{defn}

 Let $H,A$ be self-adjoint operators on a Hilbert space $\mathcal{H}$.

\begin{thm}\cite{ABG,Ge,M}\label{Mourrethm}
Suppose $H\in C^2(A)$. We further assume that for a compact interval $I\Subset \re$, $E_{H}(I)[H,iA] E_{H}(I)\geq  c E_{H}(I)+K$ with $c>0$ and a compact operator $K\in B(\mathcal{H})$, where $E_H(I)$ is the spectral projection of $H$ of the interval $I$. Then the spectrum of $H$ on $I$ is absolutely continuous except a finite number of eigenvalues.
\end{thm}

Actually, weighted resolvent estimates are proved there.
For the relationship between the absolute continuity of the spectrum and the resolvent estimate, see \cite[Vol IV,Theorem XIII.20]{RS}.

\begin{lem}\cite[Theorem 6.3.4]{ABG}\label{C^1cri2}
Assume that $e^{itA}$ leaves $D(H)$ for each $t\in \re$. If the quadratic form $[H,A]$ on $D(H)\cap D(A)$ can be extended to a bounded operator $[H,A]_{\mathrm{ext}}$ from $D(H)$ to $D(H)^*$, then $H\in C^1(A)$. If in addition $[H,A]_{\mathrm{ext}}$ is bounded from $D(H)$ to $\mathcal{H}$ and the quadratic form $[[H,A]_{\mathrm{ext}},A]$ on $D(H)\cap D(A)$ can be extend to a bounded operator $[[H,A]_{\mathrm{ext}},A]_{\mathrm{ext}}$ from $D(H)$ to $D(H)^*$, then $H\in C^2(A)$.

\end{lem}



\subsection*{Mapping properties of Schr\"odinger propagators}

To employ the criterion Lemma \ref{C^1cri2} in our situation, some information of the Schr\"odinger propagator $e^{itA}$ is needed.

\begin{lem}\label{Schpreserve}
Let $M$ be a closed manifold and $\mathcal{H}=L^2(M;d\m)$ with a smooth density $d\m$ on $M$. Suppose that $A\in \Psi^1$ and $A=A^*$ on $C^{\infty}(M)$. We denote the unique self-adjoint extension of $A$ by the same symbol $A$ (see Proposition \ref{esscri1}). Then the propagator $e^{itA}$ preserves $C^{\infty}(M)$ for each $t\in \re$.

\end{lem}

\begin{proof}
It suffices to show that $e^{itA}$ preserves $H^k$ for all $k>0$. Fix a Riemannian metric $g$ on $M$ and
\begin{align*}
N=(I-\Delta_g)^{\frac{k}{2}},\quad N_{\e}=(I-\Delta_g)^{\frac{k}{2}}(I-\e\Delta_g)^{-\frac{k+1}{2}}.
\end{align*}
Then $N\in \Psi^k$ is an elliptic self-adjoint operator with domain $H^{k}$ and $\{N_{\e}\}_{0<\e\leq 1}\subset \Psi^{-1}$ is a family of bounded self-adjoint operators such that $\{N_{\e}\}_{0<\e\leq 1}\subset \Psi^{-1}$ is uniformly bounded in $\Psi^k$ and $N_{\e}\to N$ in $\Psi^{k+0}$.

Let $u\in H^k$ and show $e^{itA}u\in H^k$. We observe that $v_{\e}(t):=N_{\e}e^{itA}u$ belongs to $D(A)$ since $H^1\subset D(A)$ (by $A\in \Psi^1$) and $N_{\e}\in \Psi^{-1}$. Then $v_{\e}(t)$ satisfies 
\begin{align*}
i\pa_tv_{\e}(t)+(A+B_{\e})v_{\e}(t)=0,\quad v_{\e}(t)|_{t=0}=N_{\e}u,
\end{align*}
where $B_{\e}:=[N_{\e},A]N_{\e}^{-1}$ is uniformly bounded in $\Psi^0$. Since $v_{\e}(t)\in D(A)$ and $A$ is self-adjoint, we have
\begin{align*}
\frac{d}{dt}\|v_{\e}(t)\|_{\mathcal{H}}^2=2\Re(v_{\e}(t),iB_{\e}v_{\e}(t))_{\mathcal{H}}\leq C\|v_{\e}\|_{\mathcal{H}}^2
\end{align*}
with $C>0$ independent of $0<\e\leq 1$. Then Gronwall's inequality implies that $\{v_{\e}(t)\}_{0<\e\leq 1}$ is uniformly bounded in $L^2$. On the other hand, since $N_{\e}\to N$ in $\Psi^{k+0}$, we have $v_{\e}(t)\to Ne^{itA}u$ as $\e\to 0$ in the distributional sense. These imply $Ne^{itA}u\in \mathcal{H}$. By the ellipticity of $N$ (and $e^{itA}u\in \mathcal{H}$), we conclude $e^{itA}u\in H^k$. This completes the proof.

\end{proof}

\subsection*{Application of the Mourre theory}

Now let $P,V$ be satisfying the assumption of Theorem \ref{mainess}. Suppose that $0<m\leq 1$ holds and $p$ is not elliptic.
We write $H=P+V$.
Take the symmetric operator $A$ as
\begin{align*}
A:=\Op(a)\in\begin{cases}
\Psi^{1-m}\,\, \text{if}\,\, 0< m<1,\\
\Psi^{+0}\,\, \text{if}\,\, m=1,
\end{cases}\,\,
a(x,\x)=\begin{cases}
-k(x,\x)\jap{\x}^{1-m}\,\, \text{if}\,\, 0< m<1,\\
-k(x,\x)\log\jap{\x}\,\, \text{if}\,\, m=1,
\end{cases}
\end{align*}
where $k\in S^0$ is as in Lemma \ref{escexist}.
By Theorem \ref{mainess}, it turns out that $H$ and $A$ are essentially self-adjoint on $C^{\infty}(\tor)$. We denote the unique self-adjoint extensions of $H$ and $A$ by the same symbols $H$ and $A$ respectively. In order to control the lower order term in the Mourre inequality and to prove the $A$-regularity of $H$, we need the following standard lemma.

\begin{lem}\label{Mourrelow}
Let $k<m$ and $b\in S^{k}$ be supported away from $\left(\L_{\mathrm{inc}}\cup \L_{\mathrm{out}}\right)\cap  \{|\x|\geq 1\}$, where $\L_{\mathrm{inc}},\L_{\mathrm{out}}$ are defined in Subsection \ref{escapesubst}. Then
Then the operator $\Op(b)(H-i)^{-1}$ is bounded and compact on $L^2$. In particular, $\Op(b)E_I(H)$ is a compact operator for $I\Subset \re$.
\end{lem}

\begin{proof}
We set $c(x,\x)=b(x,\x)(p(x,\x)+v(x,\x)-i)^{-1}$, where we recall that $p=\s(P)$ and $v=\s(V)$. By the support property of $b$, $p+v$ is elliptic on $\supp b$, which implies $c\in S^{k-m}$. Then we have
\begin{align*}
\Op(c)(H-i)=\Op(b)+R,\quad R\in \Psi^{k-1}.
\end{align*}
Since $k<m\leq 1$, $\Op(c)$ and $R$ are bounded and compact operators on $L^2$. Hence $\Op(b)(H-i)^{-1}=\Op(c)-R(H-i)^{-1}$ is a bounded and compact operator on $L^2$.

\end{proof}

Now we prove the $A$-regularity of $P+V$.
\begin{lem}\label{P+VC^2}
$P+V\in C^2(A)$.
\end{lem}

\begin{proof}

First, we show that $[H,A](H-i)^{-1},[[H,A],A]\in B(L^2)$. Since $m\leq 1$, we have $[[H,A],A]\in \Psi^0$ and hence $[[H,A],A]\in B(L^2)$. For $m<1$, $[H,A]\in \Psi^0$ and $[H,A]\in B(L^2)$. To prove $[H,A](H-i)^{-1}\in B(L^2)$ for $m=1$, we observe
\begin{align}\label{conjder}
H_{p}a(x,\x)&=-(H_pk)(x,\x)\log\jap{\x}-k(x,\x)H_p(\log\jap{\x})=:b(x,\x)+b_1(x,\x).
\end{align}
Since $H_p(\log\jap{\x})\in S^0$, we have $b_1\in S^0$ and hence $[P,iA]-\Op(b)\in \Psi^{0}$. Although $\Op(b)\in \Psi^{+0}\setminus \Psi^0$, Lemma \ref{Mourrelow} shows $\Op(b)(H-i)^{-1}\in B(L^2)$ since $H_pk$ is supported away from $\left(\L_{\mathrm{inc}}\cup \L_{\mathrm{out}}\right)\cap  \{|\x|\geq 1\}$. Then we conclude $[P,A](H-i)^{-1}\in B(L^2)$. Moreover, since $V\in \Psi^{m-\k}$ with $\k<1$, $[V,A]\in B(L^2)$. These show $[H,A](H-i)^{-1}\in B(L^2)$.

Next, we show that $e^{itA}$ preserves $D(H)$.
By Lemma \ref{Schpreserve} and Taylor's theorem, the identity
\begin{align*}
He^{itA}=e^{itA}H+te^{itA}[H,iA]+\int_0^t(t-s)e^{i(t-s)A}[[H,iA],iA]  e^{isA}ds
\end{align*}
holds on $C^{\infty}(M)$. Since $[H,A](H-i)^{-1},[[H,A],A]\in B(L^2)$, we have
\begin{align}\label{HeitAineq}
\|He^{itA}u\|_{L^2}\leq C(\|u\|_{L^2}+\|Hu\|_{L^2})\quad u\in C^{\infty}(\tor)
\end{align}
with $C>0$. If $u\in D(H)$, then the essential self-adjointness of $H$ on $C^{\infty}(\tor)$ shows the existence of $u_n\in C^{\infty}(\tor)$ such that $u_n\to u$ and $Hu_n\to Hu$ as $n\to \infty$. By Lemma \ref{Schpreserve} and \eqref{HeitAineq}, $e^{itA}u_n\in C^{\infty}(\tor)$ and the sequence $\{He^{itA}u_n\}_n$ is Cauchy in $L^2$. These imply $e^{itA}u\in D(H)$, which shows that $e^{itA}$ preserves $D(H)$. 

Applying Lemma \ref{C^1cri2}, we conclude $H\in C^2(A)$.

\end{proof}

Finally, we prove the Mourre inequality.

\begin{lem}\label{Mouineq}
Let $I\Subset \re$. Then there exists $c>0$ such that
\begin{align*}
E_I(H)[H,iA]E_I(H)\geq cE_I(H)+K,
\end{align*}
where $K$ is a compact operator on $L^2$.

\end{lem}

\begin{proof}
Since $[V,A]$ is a negative order psedodifferential operator, it is a compact operator on $L^2$. Therefore, it suffices to prove an inequality where $[H,iA]$ is replaced by $[P,iA]$.

Set $\ell_m(\x)=\jap{\x}^{1-m}$ for $0< m<1$ and $\ell_m(\x)=\log\jap{\x}$ for $m=1$. 
Similar to \eqref{conjder}, we write $H_pa=-kH_p\ell_m+b$, where $b\in S^{+0}$ which is supported away form $\left(\L_{\mathrm{inc}}\cup \L_{\mathrm{out}}\right)\cap \{ \pm \x\geq 1\}$. We write $p(x,\x)=a_{\pm}(x)|\x|^m$ for $\pm\x\geq 1$ as in Lemma \ref{prilem}. Then there exists $c>0$ such that
\begin{align*}
-kH_pc_m(x,\x)=k(x,\x)a_{\pm}'(x)|\x|^m\pa_{\x}\ell_m(\x) \geq c
\end{align*}
near $(x,\x)\in \left(\L_{\mathrm{inc}}\cup \L_{\mathrm{out}}\right)\cap \{|\x|\geq 1\}$ due to the choice of $k$ in Lemma \ref{escexist}. Therefore, we have $H_pa\geq c+ b_3$ everywhere, where $b_2\in S^{+0}$ is supported away form $\left(\L_{\mathrm{inc}}\cup \L_{\mathrm{out}}\right)\cap \{|\x|\geq 1\}$. By the sharp G\aa rding inequality, we have
\begin{align*}
[P,iA]\geq c+\Op(b_2)+K_1,
\end{align*}
where $K_1\in \Psi^{-1+0}$ is a compact operator. By Lemma \ref{Mourrelow}, the operator $\Op(b_2)E_I(H)$ is bounded and compact on $L^2$. Then $E_I(H)[P,iA]E_I(H)\geq cE_I(H)+K$ holds with a compact operator $K=E_I(H)(\Op(b_2)+K)E_I(H)$. This completes the proof.

\end{proof}

\begin{proof}[Proof of Theorem \ref{spthm} $(i)$]
The result follows from Theorem \ref{Mourrethm}, Lemmas \ref{P+VC^2} and \ref{Mouineq}.

\end{proof}

\subsection{Discreteness of the spectrum}\label{subsecdiscspetc}

In this subsection, we prove Theorem \ref{spthm} $(ii)$. Let $P,V$ be satisfying the assumption of Theorem \ref{mainess} and suppose that $m>1$ holds and $p$ is not elliptic. To prove the discreteness of the spectrum, we need the following type of the subelliptic estimate.

\begin{prop}\label{propradsink}
For each $\d>0$ and $N>0$, then there exists $C>0$ such that
\begin{align}\label{radsinkpropes}
\|u\|_{H^{\frac{m-1}{2}-\d}}\leq C\|(P+V)u\|_{H^{-\frac{m-1}{2}-\d}}+C\|u\|_{H^{-N}}.
\end{align}
for $u\in H^{-N}$ with $(P+V)u\in H^{-\frac{m-1}{2}-\d}$. In particular, if $u\in L^2$ with $(P+V)u\in L^2$, then we have $u\in H^{\frac{m-1}{2}-\d}$.

\end{prop}

\begin{rem}
This estimate is an analog (or a stronger form) of \cite[Proposition 5.2]{T} although it holds only on the minimal domain of the operator when the higher dimensional cases are concerned. To prove the estimate on the maximal domain (that is, for functions satisfying $u\in L^2$ with $(P+V)u\in L^2$), we need to use the fact that the characteristic set of $p$ consists of the radial source and the radial sink. In particular, the radial source and sink sets are isolated. Taking it into account, we can use the radial sink estimate (\cite[Theorem E.54]{DZ}) with the vanishing propagation term (more precisely, the fact that the wavefront set of $B_1$ appearing in \cite[Theorem E.54]{DZ} is contained in the elliptic set in our case) instead. Here, we give the self-contained proof.
\end{rem}

\begin{proof}
We may assume $\k>0$. Let $k\in S^0$ is as in Lemma \ref{escexist}.
Set 
\begin{align*}
b(x,\x)=&k(x,\x)\jap{\x}^{-2\d}\in S^{-2\d},\quad \l_{\e}=\jap{\e \x}^{-\ell},\quad b_{\e}=b\l_{\e}^2,\\
B=&\Op(b),\quad B_{\e}=\Op(b_{\e}),\quad  \L^j=(I-\Delta)^{\frac{j}{2}},  \quad \L_{\e}=(1-\e\Delta)^{-\frac{\ell}{2}},\quad 
\end{align*}
for $0<\e\leq 1$ and $\ell>0$. We remark that the estimates given in the following are uniform in $0<\e\leq 1$ but are not needed to be uniformity with respect to $\ell>0$.

First, we claim that there exists $c>0$ such that
\begin{align}\label{sinkopineq}
[P+V,iB_{\e}]\geq c\L_{\e}\jap{D_x}^{m-1-2\d}\L_{\e}+\L_{\e}S_0\L_{\e}-C_0\L_{\e}\jap{D_x}^{m-1-\k-2\d}\L_{\e},
\end{align}
as a quadratic form on $C^{\infty}(\tor)$, where $B_{\e}=\Op(b_{\e})$, $\L_{\e}=\Op(\l_{\e})$, $S_0=\Op(s_0)$ and $s_0\in S^{m-1-2\d}$ is supported away from $(\L_{\mathrm{inc}}\cup \L_{\mathrm{out}})\cap \{|\x|\geq 1\}$ (see Subsection \ref{escapesubst} for the definition of $\L_{\mathrm{inc}},\L_{\mathrm{out}}$).
Since $H_p\jap{\x}^{-2\d}=-a_{\pm}'(x)|\x|^m\pa_{\x}\jap{\x}^{-2\d}=2\d a_{\pm}'(x)|\x|^m\x \jap{\x}^{-2\d-2}$ by \eqref{prepresent}, there is $c>0$ such that $H_p\jap{\x}^{-2\d}\geq c\jap{\x}^{m-1-2\d}$ if $(x,\x)$  
belongs to a neighborhood of $(\L_{\mathrm{inc}}\cup \L_{\mathrm{out}})\cap \{|\x|\geq 1\}$. Then Lemma \ref{escexist} shows $H_pb(x,\x)\geq c\jap{\x}^{m-1-2\d}+s_0$, where $s_0\in S^{m-1-2\d}$ is supported away from $(\L_{\mathrm{inc}}\cup \L_{\mathrm{out}})\cap \{|\x|\geq 1\}$.
By a similar calculation, we have $k(x,\x)H_p\jap{\e \x}^{-\ell}\geq 0$ (due to $\ell>0$) and hence
\begin{align}\label{radsinkclin}
\l_{\e}(\x)^{-2}H_pb_{\e}(x,\x)\geq c\jap{\x}^{m-1-2\d}+s_0(x,\x).
\end{align}

Since $V\in \Psi^{m-\k}$ with $0<\k<1$ and $B\in \Psi^{-2\d}$, we have
\begin{align}\label{Vlowerest}
\L_{\e}^{-1}[V,iB_{\e}]\L_{\e}^{-1}\geq -C_0'\L^{m-1-\k-2\d}
\end{align}
holds with a constant $C_0'>0$ independent of $0<\e\leq 1$.
By the sharp G\aa riding inequality, \eqref{radsinkclin}, and \eqref{Vlowerest}, there is $C_0>0$ independent of $0<\e\leq 1$ such that $\L_{\e}^{-1}[P+V,iB_{\e}]\L_{\e}^{-1}\geq c\L^{m-1-2\d}+S_0-C_0\L^{m-1-\k-2\d}$, which shows \eqref{sinkopineq}.

The operator inequality \eqref{sinkopineq} implies
\begin{align}
c\|\L_{\e}u\|_{H^{\frac{m-1}{2}-\d}}^2\leq& |(u,[P+V,iB_{\e}]u)_{L^2}|+|(\L_{\e}u,S_0\L_{\e}u)_{L^2}|+C_0\|\L_{\e}u\|_{H^{\frac{m-1-\k}{2}-\d}}^2\label{radsiquin1}
\end{align}
for $u\in C^{\infty}(\mathbb{T})$. By the AM-GM inequality,
\begin{align}
|(u,[P+V,iB_{\e}]u)_{L^2}|\leq& 2|(B\L_{\e}(P+V)u,\L_{\e}u)_{L^2}|\nonumber\\
\leq&2\|B\L_{\e}(P+V)u\|_{H^{-\frac{m-1}{2}+\d}}  \|\L_{\e}u\|_{H^{\frac{m-1}{2}-\d}}\nonumber\\
\leq&\frac{2}{c}\|B\L_{\e}(P+V)u\|_{H^{-\frac{m-1}{2}+\d}}^2+\frac{c}{2}  \|\L_{\e}u\|_{H^{\frac{m-1}{2}-\d}}^2\nonumber\\
\leq&C'\|\L_{\e}(P+V)u\|_{H^{-\frac{m-1}{2}-\d}}^2+\frac{c}{2}  \|\L_{\e}u\|_{H^{\frac{m-1}{2}-\d}}^2\label{radsiquin2}
\end{align}
where $C'>0$ is a constant and we use $B\in \Psi^{-2\d}$ in the last line. On the other hand, since  $s_0\in S^{m-1-2\d}$ is supported away from $(\L_{\mathrm{inc}}\cup \L_{\mathrm{out}})\cap \{|\x|\geq 1\}$, the elliptic estimate (or the usual parametrix construction) implies
\begin{align}\label{radsiquin3}
|(\L_{\e}u,S_0\L_{\e}u)_{L^2}|\leq C''\|\L_{\e}(P+V)u  \|_{H^{-\frac{m-1}{2}-\d}(\mathbb{T})}^2   +C''\|\L_{\e}u\|_{H^{-N}}^2
\end{align}
with a constant $C''>0$ independent of $0<\e\leq 1$. Moreover, the interpolation inequality gives
\begin{align}\label{radsiquin4}
C_0\|\L_{\e}u\|_{H^{\frac{m-1-\k}{2}-\d}}^2\leq \frac{c}{4}\|\L_{\e}u\|_{H^{\frac{m-1}{2}-\d}}^2+ C'''\|\L_{\e}u\|_{H^{-N}}^2.
\end{align}
By \eqref{radsiquin1}, \eqref{radsiquin2}, \eqref{radsiquin3}, and \eqref{radsiquin4}, we obtain
\begin{align}\label{radsiquin5}
\|\L_{\e}u\|_{H^{\frac{m-1}{2}-\d}}^2\leq C\|\L_{\e}(P+V)u  \|_{H^{-\frac{m-1}{2}-\d}}^2+C\|\L_{\e}u\|_{H^{-N}}^2.
\end{align}
for $u\in C^{\infty}(\mathbb{T})$, where we set $C:=\frac{4}{c}\max(C'+C'',C''+C''')$ which is independent of $0<\e\leq 1$. This proves \eqref{radsinkpropes} for $u\in C^{\infty}(\mathbb{T})$ taking $\e\to 0$.

Let $u\in H^{-N}$ with $(P+V)u\in H^{-\frac{m-1}{2}-\d}$. Taking $\ell\geq \frac{m-1}{2}+N-\d$, we have
\begin{align*}
\L_{\e}\in B(H^{-N}, H^{\frac{m-1}{2}-\d}),\quad \L_{\e}(P+V)\in B(H^{-N},H^{-\frac{m-1}{2}-\d})
\end{align*}
since $\L_{\e}\in \Psi^{-\ell}$ and $\L_{\e}(P+V)\in \Psi^{m-\ell}$ for fixed $\ell>0$. Since $C^{\infty}(\mathbb{T})$ is dense in $H^{-N}$, there exists $u_n\in C^{\infty}(\mathbb{T})$ such that $u_n\to u$ in $H^{-N}$. Then we have
$\L_{\e}u_n\to u$ in $H^{\frac{m-1}{2}-\d}$ and $\L_{\e}(P+V)u_n\to \L_{\e}(P+V)u$ in $H^{-\frac{m-1}{2}-\d}$ as $n\to \infty$. Inserting $u_n$ into \eqref{radsiquin5} and taking $n\to \infty$, we conclude that \eqref{radsiquin5} is true for such $u$. Next, taking $\e\to 0$, we obtain \eqref{radsinkpropes}. This completes the proof.

\end{proof}

\begin{rem}
The Yosida approximation $\L_{\e}$ is not needed to prove the inequality \eqref{radsinkpropes} for $u\in C^{\infty}(\tor)$ but is needed for more general classes of functions satisfying $u\in H^{-N}$ with $(P+V)u\in H^{-\frac{m-1}{2}-\d}$.
\end{rem}

\begin{proof}[Proof of Theorem \ref{spthm} $(ii)$]

We recall from \eqref{Dmaxdef} that $D_{\mathrm{max}}(P+V)=\{u\in L^2\mid (P+V)u\in L^2\}$ is the maximal domain of $((P+V|_{C^{\infty}(\tor)})^*$.
Then \eqref{radsinkpropes} implies that the Banach space $D_{max}(P+V)$ with its graph norm is included in $H^{\frac{m-1}{2}-\d}$ continuously for each $\d>0$. We also note that if $\d<\frac{m-1}{2}$, then $H^{\frac{m-1}{2}-\d}$ is compactly embedded into $L^2$ by the Rellich-Kondrachov theorem. Combining them, we conclude that the natural inclusion map $D_{\mathrm{max}}\hookrightarrow L^2$ is a compact operator.

Now fix a self-adjoint extension of $P+V$. We write its domain by $D$. By virtue of \cite[Proposition 5.1]{T}, it suffices to prove that the natural inclusion $D\subset L^2$ (where we regard $D$ as a Banach space with the graph norm) is compact for proving Theorem \ref{spthm} $(ii)$. This follows from the continuity of $D\hookrightarrow D_{\mathrm{max}}$ and the compactness of $D_{max}\hookrightarrow L^2$, which completes the proof. 
\end{proof}

\appendix

\section{Anisotropic Sobolev space}

In this appendix, we recall some fundamental properties of symbols with variable order, based on \cite[Appendix]{FRS}. 
Let $(M,g)$ be a closed Riemannian manifold.
For a real valued symbol $k\in S^0$ and $\rho\in (\frac{1}{2},1)$, we set
\begin{align*}
g_{k}(x,\x)=\jap{\x}^{k(x,\x)},\quad S_{\rho}^{k(x,\x)}:=\{a\in C^{\infty}(T^*M)\mid |\pa_{x}^{\a}\pa_{\x}^{\b}a(x,\x)|\leq C\jap{\x}^{k(x,\x)+(1-\rho)|\a|-\rho|\b|}\}.
\end{align*}

\begin{lem}\label{Anistrolem}

\noindent$(i)$ Set $k_{\mathrm{max}}:=\max_{(x,\x)\in T^*M}k(x,\x)$. For $B\in \Op S_{\rho}^{k(x,\x)}$ with $\rho\in (\frac{1}{2},1)$, we have $B\in B(H^s,H^{s-k_{\mathrm{max}}})$.

\noindent$(ii)$ There exists $G_k\in \bigcap_{1/2<\rho<1}\Op S^{k(x,\x)}_{\rho}$ such that 
\begin{align*}
G_k-\Op(g_k)\in \bigcap_{1/2<\rho<1}\Op S^{k(x,\x)-2\rho+1}_{\rho},
\end{align*}
and $G_k$ is formally self-adjoint and invertible both on $C^{\infty}(M)$ and $\mathcal{D}'(M)$. Moreover, $G_k^{-1}\in \bigcap_{1/2<\rho<1}\Op S^{-k(x,\x)}_{\rho}$ and $G_k^{-1}-\Op(g_k^{-1})\in \bigcap_{1/2<\rho<1}\Op S^{-k(x,\x)-2\rho+1}_{\rho}$.

\noindent$(iii)$ Let $\L$ be a closed conic subset of $T^*M\setminus 0$. Suppose that there exists $\e_0\in \re$ such that $k(x,\x)\geq \e_0$ in a conic neighborhood $U$ of $\L$. Then 
\begin{align*}
u\in\mathcal{D}'(M),\,\, G_ku\in H^{\ell}\Rightarrow \mathrm{WF}^{\ell+\e_0}(u)\cap \L=\emptyset
\end{align*}

\noindent$(iv)$ For $m\in \re$ and $A\in \Psi^m$,
 \begin{align*}
G_kAG_k^{-1}=A+i\Op(H_{a}(k\log\jap{\x}))+\Psi^{m-2+0},
\end{align*}
where $\Psi^{\ell+0}:=\cap_{\e>0}\Psi^{\ell+\e}$.
\end{lem}

\begin{proof}
The part $(i)$, $(ii)$ follows from \cite[A.3.4]{FRS} and \cite[Lemma 12, Corollary 4]{FRS} respectively.
The proof of the part $(iv)$ is same as in \cite[Lemma 3.2]{FS}. We just write $G_kAG_k^{-1}=A+i[A,iG_k]G_{k}^{-1}$.

We briefly discuss the proof of $(iii)$. Set $G_ku=v\in H^{\ell}$ and write $u=G_k^{-1}v$. Take $b\in S^0$ such that $b=1$ on $\L\cap \{|\x|_g\geq 1\}$ and $\supp b\subset U$.
 Since the principal part of the symbol of $G_k^{-1}$ is $g_k^{-1}=\jap{\x}^{-k(x,\x)}$, our assumptions imply $BG_k^{-1}(I-\Delta_g)^{\frac{\e_0}{2}}\in \Psi^0$ and hence it preserves $H^{\ell+\e_0}$. Therefore, due to the ellipticity of $(I-\Delta_g)$, we have $(I-\Delta_g)^{-\frac{\e_0}{2}}v\in H^{\ell+\e_0}$ and hence $Bu=BG_k^{-1}(I-\Delta_g)^{\frac{\e_0}{2}}\cdot (I-\Delta_g)^{-\frac{\e_0}{2}} v\in H^{\ell+\e_0}$.

\end{proof}

\section{Subprincipal symbol in a standard coordinate}

As a general result, we have the following proposition. See \cite[Theorem 18.1.34]{Ho} for pseudodifferential operators acting on half-densities.

\begin{prop}\label{prireal}
Let $P\in \Psi^m_{\mathrm{phg}}$ and $d\m$ be a smooth density on $M$. In a coordinate patch, we write the total symbol of $P$ by $p_{\mathrm{tot}}\sim \sum_{j=0}^{\infty}p_{m-j}$ with $p_{m-j}$ homogeneous of order $m-j$ and $d\m=\m(x)dx$, where $dx$ is the Lebesgue measure on that coordinate. Suppose that $P$ is symmetric on $C^{\infty}(M)$ with respect to the metric on $L^2(d\m)=L^2(M;d\m)$. Then $p_m=\s(P)$ is real-valued and
\begin{align*}
\Im \s_{\mathrm{sub}}(P)=-\frac{1}{2\m(x)}\sum_{j=1}^n\pa_{x_j}\m(x)\pa_{\x_j}p_m(x,\x),
\end{align*}
where we recall $\s_{\mathrm{sub}}(P)=p_{m-1}+\frac{i}{2}\sum_{j=1}^n\pa_{x_j}\pa_{\x_j}p_m$.

\end{prop}

\begin{proof}
Since the problem is local, we may assume $M=\re^n$ and $\m\in C_b^{\infty}(\re^n)$. We denote $P^*$ by the adjoint of $P$ with respect to the Lebesgue measure $dx$ and write $P^*=p_{\mathrm{tot}}^*(x,D_x)$. 
The adjoint operator of $P$ with respect to $P$ is $\m^{-1}P^* \m$ since $(Pu,w)_{L^2(d\m)}=(\m Pu,w)_{L^2(dx)}=(u, P^* \m w)_{L^2(dx)}=(u, \m^{-1}P^* \m w)_{L^2(d\m)}$. Thus our assumption implies $P= \m^{-1}P^* \m$. 

We compute the symbol of $\m^{-1}P^*\m$. This operator is a pseudodifferential operator with a three variable symbol, that is, $\m^{-1}P^*\mu(x)=\frac{1}{(2\pi)^n}\int_{\re^n}\int_{\re^n}a(x,y,\x)e^{i(x-y)\cdot \x}u(y)dyd\x$, where $a(x,y,\x)=\m(x)^{-1}p_{\mathrm{tot}}^*(x,\x)\m(y)$ that is of order $m$. Then it is known that there exists $b\in S^m$ such that $\m^{-1}P\m=b(x,D_x)$ and $b(x,\x)\sim \sum_{\a\in\mathbb{Z}_{\geq 0}^n}\left.\frac{i^{|\a|}}{\a!}D_{y}^{\a}D_{\x}^{\a}a(x,y,\x)\right|_{y=x}$. Combining this with the well-known asymptotic formula $p_{\mathrm{tot}}^*(x,\x)\sim \sum_{\a\in\mathbb{Z}_{\geq 0}^n}\frac{i^{|\a|}}{\a!}D_{x}^{\a}D_{\x}^{\a}\overline{p_{\mathrm{tot}}}(x,\x)$, we have
\begin{align*}
b(x,\x)=&p^*(x,\x)-i\m(x)^{-1}\sum_{j=1}^n\pa_{x_j}\m(x)\pa_{\x_j}p^*(x,\x)\\
=&\overline{p}(x,\x)-i\sum_{j=1}^n\pa_{x_j}\pa_{\x_j}\overline{p}(x,\x)   -i\m(x)^{-1}\sum_{j=1}^n\pa_{x_j}\m(x)\pa_{\x_j}\overline{p}(x,\x)\\
=&\overline{p_m}(x,\x)+\overline{p_{m-1}}(x,\x)-i\sum_{j=1}^n\pa_{x_j}\pa_{\x_j}\overline{p_m}(x,\x)   -i\m(x)^{-1}\sum_{j=1}^n\pa_{x_j}\m(x)\pa_{\x_j}\overline{p_m}(x,\x)
\end{align*}
modulo $S^{m-2}$. By the relation $b(x,D_x)=\m P^*\m=P=p_{\mathrm{tot}}(x,D_x)$, we have $p_{\mathrm{tot}}=b$ and hence
\begin{align*}
p_m=\overline{p_m},\quad p_{m-1}-\overline{p_{m-1}}=-i\sum_{j=1}^n\pa_{x_j}\pa_{\x_j}p_m(x,\x)   -i\m(x)^{-1}\sum_{j=1}^n\pa_{x_j}\m(x)\pa_{\x_j}p_m(x,\x),
\end{align*}
which completes the proof.
\end{proof}

\begin{cor}\label{stdsubprireal}
Let $P\in \Psi^m_{\mathrm{phg}}$ and $dx$ be the flat density on the torus $\tor$. In a standard coordinate (see Definition \ref{stdcoord}), we write the total symbol of $P$ by $p_{\mathrm{tot}}\sim \sum_{j=0}^{\infty}p_{m-j}$ with $p_{m-j}$ homogeneous of order $m-j$. If $P$ is symmetric on $C^{\infty}(\tor)$ with respect to the metric on $L^2(\tor)=L^2(\tor;dx)$. Then $p_m(=\s(P))$ and $\s_{\mathrm{sub}}(P)$ are real-valued.
\end{cor}

\begin{proof}
We just apply the last proposition with $\m(x)=1$.

\end{proof}


\begin{thebibliography}{99}

\bibitem{ABG} W. O. Amrein, A. Boutet de Monvel, and V. Georgescu, C0-groups, commutator methods and spectral theory of N-body Hamiltonians, Progress in Mathematics, vol. 135, Birkh\"auser Verlag, 1996.

\bibitem{BCHM} J. F. Bony, R. Carles, D. H\"afner, L. Michel, Scattering theory for the Schr\"odinger equation with repulsive potential. J. Math. Pures Appl. (9) \textbf{84} (2005), no. 5, 509--579.

\bibitem{BMS} M. Braverman, O. Milatovich, M. Shubin, Essential self-adjointness of Schr\"odinger-type operators on manifolds. Russian Math. Surveys \textbf{57}, (4) (2002) 641--692.



\bibitem{Ch} P. Chernoff, Essential self-adjointness of powers of generators of hyperbolic equations. J. Funct. Anal, \textbf{12} (1973), 401--414.

\bibitem{C} Y. Colin de Verdi\`ere, Spectral theory of pseudodifferential operators of degree 0 and an application to forced linear waves. Anal. PDE, \textbf{13} (2020), no.5, 1521--1537.

\bibitem{CS} Y. Colin de Verdi\`ere, L. Saint-Raymond, Attractors for two-dimensional waves with homogeneous Hamiltonians of degree 0. Comm. Pure Appl. Math., \textbf{73} (2020), no.2, 421--462.

\bibitem{CB} Y. Colin de Verdi\`ere, C. Bihan, On essential-selfadjointness of differential operators on closed manifolds. Ann. Fac. Sci. Toulouse Math, \text{31} (2022) no. 5, 1287--1302.

\bibitem{DW} N. V. Dang, M. Wrochna, Complex powers of the wave operator and the spectral action on Lorentzian scattering spaces. to appear in Journal of the European Mathematical Society (JEMS), arXiv:2012.00712, (2020).


\bibitem{DS1} J. Derezi\'nski, D. Siemssen, Feynman propagators on static spacetimes. Rev. Math. Phys. \textbf{30}, (2018), 1850006.


\bibitem{DS2} J. Derezi\'nski, D. Siemssen, An Evolution Equation Approach to Linear Quantum Field Theory. Quantum mathematics II, 17--101. Springer INdAM Ser., 58.


\bibitem{DZ} S. Dyatlov, M. Zworski, Mathematical theory of scattering resonances. Graduate Studies in Mathematics \textbf{200}, AMS 2019.

\bibitem{DZ2} S. Dyatlov, M. Zworski, Microlocal analysis of forced waves. Pure and Applied Mathematics, \textbf{1}, (2019), 359--394.


\bibitem{FRS} F. Faure, N. Roy, and J. Sj\"ostrand. A semiclassical approach for Anosov diffeomorphisms and Ruelle resonances. Open Math. Journal. \textbf{1}, (2008) 35--81.

\bibitem{FS} F. Faure and J. Sj\"ostrand, Upper bound on the density of Ruelle resonances for Anosov flows. Comm. Math. Phys. \textbf{308}, (2011), 325--364.

\bibitem{Ga} M. Gaffney, A special Stokes's theorem for complete Riemannian manifolds. Ann. of Math. \textbf{60}, (1954), 140--145.

\bibitem{GY} Y. G\^atel, D. Yafaev, On solutions of the Schr\"odinger equation with radiation conditions at infinity: the long-range case. Ann. Inst. Fourier (Grenoble) \textbf{49} (1999), no. 5, 1581--1602.


\bibitem{Ge} C. G\'erard, A proof of the abstract limiting absorption principle by energy estimates. J. Funct. Analysis 254 (2008), 2707--2724.

\bibitem{Ho} L. H\"ormander,  {\it Analysis of Linear Partial Differential 
Operators}, Vol.\ I-IV.  Springer Verlag, 1983--1985. 



\bibitem{IK} H. Isozaki, H. Kitada, Modified wave operators with time independent modifiers, J. Fac. Sci. Univ. Tokyo Sect. IA Math. \textbf{32} (1985) 77--104.



\bibitem{K} W. Kami\'nski, Non self-adjointness of the Klein-Gordon operator on globally hyperbolic and geodesically complete manifold. An example,  Ann. Henri Poincar\'e, \textbf{23}, (2022), 4409--4427, (2022).


\bibitem{Ko} T. Kobayashi, Intrinsic sound of anti-de Sitter manifolds. Lie theory and its applications in physics. Springer Proc. Math. Stat, \textbf{191}, Springer, Singapore, (2016), 83--99.
  
  

\bibitem{KK} F. Kassel, T. Kobayashi, Poincare series for non-Riemannian locally symmetric spaces, Adv. Math. \textbf{287}. (2016), 123--236. 
  
  

\bibitem{KK2} F. Kassel, T. Kobayashi, Spectral analysis on standard locally homogeneous spaces. preprint, arXiv:1912.12601, (2019).


\bibitem{Me} R. B. Melrose, Spectral and scattering theory for the Laplacian on asymptotically Euclidian spaces. Marcel Dekker, (1994), pp. 85--130.

\bibitem{M} E. Mourre, Absence of singular continuous spectrum for certain self-adjoint operators, Comm. Math. Phys. \textbf{78} (1980/81), no. 3, 391--408. 

\bibitem{NT} S. Nakamura, K. Taira, Essential self-adjointness of real principal type operators, Annales Henri Lebesgue. \textbf{4} (2021), 1035--1059.


\bibitem{NT2} S. Nakamura, K. Taira, Essential self-adjointness for Klein-Gordon type operators on asymptotically static, Cauchy-compact spacetimes, Commun. Math. Phys. \textbf{398} (2023), no. 3, 1153--1169.



\bibitem{NT3} S. Nakamura, K. Taira, A remark on the essential self-adjointness for Klein-Gordon type operators, Annales Henri Poincar\'e \textbf{24}, (2023) no. 8, 2587--2605.



\bibitem{O} B. O'Neill, Semi-Riemannian geometry. With applications to relativity. Pure and Applied Mathematics, 103. Academic Press, Inc. Harcourt Brace Jovanovich, Publishers, New York, (1983).

\bibitem{RS} Reed, M.,  Simon, B, The Methods of Modern Mathematical 
Physics. Vol.\ I--IV.  Academic Press, (1972--1980). 



\bibitem{S} R.S. Strichartz, Analysis of the Laplacian on the complete Riemannian manifold. J. Funct. Anal, \textbf{52} (1983), no.1, 48--79.

\bibitem{T} K. Taira, Scattering theory for repulsive Schr\"odinger operators and applications to limit circle problem. Anal. PDE \textbf{14} (2021), no. 7, 2101--2122.


\bibitem{T2} K. Taira, Limit circle problem for a Fuchsian differential operator on a torus (Spectral and Scattering Theory and Related Topics), RIMS Kokyuroku, (2021),  2195, 113--127.

\bibitem{Tao} Z. Tao, $0$-th order pseudo-differential operator on the circle, to appear in Proc. Amer. Math. Soc. arXiv:1909.06316.

\bibitem{V1} A. Vasy, Microlocal analysis of asymptotically hyperbolic and Kerr-de Sitter spaces (with an appendix by Semyon Dyatlov). Invent. Math. \textbf{194} (2013), no. 2, 381--513. 

\bibitem{V2} A. Vasy, Essential self-adjointness of the wave operator and the limiting absorption principle on Lorentzian scattering spaces, J. Spectr. Theory. \textbf{10}, (2020), 439--461.

\bibitem{W} J. Wang, The scattering matrix for 0th order pseudodifferential operators, Annales de l'Institut Fourier, Tome \textbf{73} (2023) no. 5, 2185--2237.




\bibitem{WZ} M. Wrochna, R. Zeitoun, The wave resolvent for compactly supported perturbations of static spacetimes (with Ruben Zeitoun), accepted in: M. Ruzhansky et al. (eds), Harmonic Analysis and Partial Differential Equations, Trends in Mathematics (2022).

\bibitem{Y} D.R. Yafaev, Mathematical Scattering Theory. Analytic Theory, Mathematical Surveys and Monographs, 158. American Mathematical Society, Providence, RI, (2010).



\bibitem{Z} M. Zworski, Semiclassical analysis, Graduate Studies in Mathematics \textbf{138}, AMS, (2012).




\end{thebibliography}
\end{document}